\newtheorem{definition}{Definition}[section]
\newtheorem{proposition}[definition]{Proposition}
\newtheorem{theorem}[definition]{Theorem}
\newtheorem{lemma}[definition]{Lemma}
\newtheorem{corollary}[definition]{Corollary}
\def\Im{\mbox{\rm Im }}
\def\FP{{\cal F}^{\perp}}
\def\Qco{\mathfrak{Qcoh}}
\def\Qcart{QMod_{\rm cart}(R)}
\newcommand{\Ch}{{\mathrm{Ch}}}
\def\cart{Mod_{\rm cart}(R)}
\def\E{{\mathscr E}}
\def\O{{\mathcal O}}
\def\F{{\mathscr F}}
\def\C{{\mathscr C}}
\def\Im{\mbox{\rm Im }}
\def\Ker{\mbox{\rm Ker }}
\def\Z{\mathbb{Z}}
\def\Hom{\mbox{\rm Hom}}
\def\Ext{\mbox{\rm Ext}}
\def\fiz{\leftarrow}
\def\Hom{\mbox{\rm Hom}}
\def\Ext{\mbox{\rm Ext}}
\def\Z{\mathbb{Z}}
\def\E{\mathcal{E}}
\def\F{\mathcal{F}}
\def\C{\mathcal{C}}
\def\A{Mod_{\rm cart}(R)}
\def\FP{{\mathcal{F}}^{\perp}}
\def\Im{\mbox{\rm Im }}
\def\Ker{\mbox{\rm ker}}
\newcommand{\K}{{\mathbf K}}
\def\Qco{\mathfrak{Qco}}
\newcommand{\Flat}{R\!\!\textrm{ -}{\rm Flat}}
\newcommand{\QProj}{{\rm Proj}\ X}
\newcommand{\QFlat}{{\rm Flat}\ X}
\newcommand{\QQProj}{{\rm Proj}\ \mathcal X}
\newcommand{\QQFlat}{{\rm Flat}\ \mathcal X}
\newcommand{\dgC}{{\mathit dg\,}\widetilde{{\mathcal C}}}
\newcommand{\barC}{{\widetilde{{\mathcal C}}}}
\newcommand{\dgF}{{\mathit dg\,}\widetilde{{\mathcal F}}}
\newcommand{\barF}{{\widetilde{{\mathcal F}}}}
\begin{document}

\title{The Derived Category of quasi-coherent sheaves\\ on an Artin stack via model structures}

\bigskip
\author{ 
\  Sergio Estrada{\footnote {This paper has been completed during the author's stay at Max Planck Institute for Mathematics in Bonn. The author would like to thank its hospitality and the excellent conditions provided for his stay at the institution.}}  \\ \ \ sestrada@um.es \\
\ \
Departamento de Matem\'atica Aplicada \\
\ \ \ Universidad de Murcia \\
\ \ 30100 Murcia \\
SPAIN}

\date{}

\maketitle

\bigskip

\thispagestyle{empty}

\begin{abstract}
We define the derived category of quasi--coherent modules for certain Artin stacks as the homotopy category of two Quillen monoidal model structures on the corresponding category of unbounded complexes of quasi--coherent modules.

\end{abstract}

{\footnotesize{\it 2010 Mathematics Subject Classification.}
Primary 14A20; Secondary 55U35,18E30}

{\footnotesize{\it Key words and phrases}: algebraic stack, Deligne-Mumford stack, cartesian $R$-module, Gro\-then\-dieck category,
model category structure.}

\baselineskip=22pt
\section{Introduction}

In \cite{EE} we develop a method for finding a family of generators of the
so-called category of quasi-coherent $R$-modules 
on an arbitrary quiver (cf. \cite[Corollary 3.5]{EE}) and we prove that the
class of flat quasi--coherent
$R$-modules is covering (cf. \cite[Theorem 4.1]{EE}). The first part of the present paper is
devoted to showing that the same
arguments of \cite{EE} can also be used in a much more general setup, that is
that of cartesian $R$-modules on a flat presheaf of rings $R$ over a small
category $\C$. This extends the main application in \cite{EE} to the category
$\Qco(X)$ of quasi--coherent sheaves on a scheme $X$ but also to the
category $\Qco({\mathcal X})$ of quasi--coherent sheaves on an Artin stack or
on a Deligne-Mumford stack. This seems to be known to some authors, but the lack of a published result of this fact in the literature becomes it into an interesting consequence of the results of \cite{EE}.   

In the second part of the paper we deal with the derived category of quasi-coherent sheaves on an Artin stack. From its abstract definition one has little control over the morphisms in the derived category and in fact it is not clear if we have only a set of maps between any two objects. A solution to these questions is provided by defining a good Quillen's theory of model categories in $\Ch( \Qco({\mathcal X}))$ (\cite{Quillen}) where the weak equivalences are the quasi-isomorphisms. Then it follows from Quillen's theory that the corresponding homotopy categoy, the derived category of $\Qco(\mathcal X)$, is truly a category. Moreover there is a simple description of the set of morphisms from two objects $M$ and $N$ in the derived category as chain homotopy classes of chain maps from a cofibrant replacement for $M$ to a fibrant replacemnent of $N$. 
Moreover, the category $\Qco({\mathcal X})$ has a tensor product, which naturally inherits to $\Ch(\Qco(\mathcal X))$, for which we should be able to compute left derived functors, so we would like that our Quillen model category structures to be {\it monoidal}. To achieve this goal, we give a general theorem (Theorem \ref{apmodel2}) that guarantees the existence of cofibrantly generated model category structures in the category of unbounded complexes of cartesian $R$-modules and later in Section \ref{secf} we specialize to the category of quasi-coherent $\O_X$-modules over Artin stacks. This is an extension of the previous papers \cite{G} and \cite{EGPT} from schemes to algebraic stacks. Our main application is that for a geometric stack $\mathcal X$ (see \cite{TV08} and \cite{Lurie}) we show the existence of a flat monoidal model category structure on $\Ch(\Qco(\mathcal X))$ (Theorem \ref{pflat}) and for algebraic stacks that satisfy the resolution property (these include global quotient stacks) we show in Theorem \ref{presol} that there is a locally projective monoidal model structure on $\Ch(\Qco(\mathcal X))$. One immediate implication from \cite{May} is then that the triangulated structure of $\mathbf{D}(\Qco(\mathcal X))$ is strongly compatible with the derived tensor product. We finish Section \ref{secf} with a list of consequences of the results of this paper for categories of modules over a flat Hopf algebroid and on the existence of adjoint functors in homotopy categories for algebraic stacks.

\section{Cartesian modules on quivers}
A quiver $Q$ is a
directed graph. An edge of
a quiver from a vertex $v_1$ to a vertex $v_2$ is denoted by
$a:v_1\to v_2$ or $v_1\stackrel{a}{\to}v_2$, the symbol $E$ will
denote the set of edges. A quiver $Q$ may be thought as a category
in which the objects are the vertices of $Q$ and the morphisms are
the paths (a path is a sequence of edges) of $Q$. The set of all
vertices will be denoted by $V$.

Let $Q=(V,E)$ be a quiver and let $R$ be a presheaf from $Q$
in the category of commutative rings, that is, for each vertex $v\in V$ we
have a ring $R(v)$ and for an edge $a:v\to w$ we have a ring
homomorphism $R(a^{op}):R(w)\to R(v)$.

We shall say that we have an $R$-module $M$ when we have an
$R(v)$-module $M(v)$ and a morphism $M(a^{op}):M(w)\to M(v)$ for each
edge $a:v\to w$ that is $R(v)$-linear. The $R$-module $M$ is said to be a
{\it cartesian $Q$-module} if for each edge $a:v\to w$ as above the morphism
$$R(v)\otimes_{R(w)}M(w)\to M(v)$$ given by $r_v\otimes m_w\mapsto
r_v M(a^{op})(m_w)$, $r_v\in R(v),\ m_w\in M(w)$ is an
$R(v)$-isomorphism.

The category of cartesian $Q$-modules is abelian when $R$ is
such that for an edge $v\to w$, $R(v)$ is a {\sl flat}
$R(w)$-module (so the kernel of a morphism between two cartesian $Q$-modules is
also cartesian). In this case
we say $R$ is flat. Coproducts and colimits may be computed
componentwise so direct limits are exact and, as a result of
Proposition \ref{nocont}, we can find a system of generators in
the category. Therefore the category of cartesian $Q$-modules is indeed a
Grothendieck category when $R$ is flat.

By the tensor product, $M\otimes_R N$, where $M$ is a right
$R$-module and $N$ a left $R$-module, we mean the $\Z$-module
($\Z(v)=\Z$, for all $v\in V$ and $\Z(a)=id_{\Z}$ for all $a\in
E$) such that
$$(M\otimes_R N)(v)=M(v)\otimes_{R(v)} N(v),$$with $(M\otimes_R N)(a)$
the obvious map. We then get the notion of a {\it flat $R$-module} and a {\it flat
cartesian $Q$-module}. We also get the notion of {\it locally projective cartesian $Q$-module} $M$, by defining that $M(v)$ is a projective $R(v)$-module for each $v\in V$. 

Given an arbitrary quiver $Q$ and a flat presheaf of rings $R$ over $Q$, we will
denote by $QMod_{\rm cart}(R)$ the category of cartesian $Q$-modules over $R$. 
\section{Cartesian modules on small categories}

Now let $\C$ be any small category, and let $R$ be a flat presheaf of rings on
$\C$. We will consider the category $\cart$ of cartesian $R$-modules. This is
an abelian category and, as a consequence of Proposition \ref{nocont}, it will
be a Grothendieck category. There is a notion of flat and of locally projective cartesian module as
before. Let $Q$ be the quiver whose vertices are the objects of $\C$, and whose
edges are the morphisms of $\C$. It is then clear that the
category $\cart$ is a full subcategory of the category $\Qcart$. Furthermore it
is also clear that if $M\subseteq N$ in $\Qcart$ and $N$ is a cartesian
$R$-module, then $M$ will be automatically a cartesian $R$-module as well. This
easy observation is
crucial in proving our main result in the next section and giving our main applications.

\section{Generators in $\cart$. Application to algebraic stacks}

With the observations made in the previous sections, we can use Proposition
3.3 of \cite{EE} to infer that $\cart$ is a Grothendieck
category. Throughout this section we will assume that $\C$ is a small category
and $R$ is a flat presheaf of rings on it. We shall denote by $Q$ the
quiver associated to $\C$.

\begin{definition} Let $M$ be a cartesian $Q$-module. The
cardinality of $M$ is defined as the cardinality of the coproduct
(in the category of sets) of all modules associated to the
vertices $v\in V$, that is
$$|M|=|\sqcup_{v\in V}M(v)|$$
\end{definition}

\begin{proposition}\label{nocont}

Let $\C$ be any small category with associated quiver $Q_{\C}=(V,E)$
and $M$ a cartesian $R$-module. Let
$\lambda$ be an infinite cardinal such that $\lambda\geq |R(v)|$ for
all $v$ and such that $\lambda\geq max\{|E|,|V|\}$. Let
$X_v\subseteq M(v)$ be subsets with $|X_v|\leq \lambda$ for all
$v$. Then there is cartesian $R$-submodule $M'\subseteq M$ with
$M'(v)$ pure for all $v$, with $X_v\subseteq M'(v)$ for all $v$
and such that $|M'|\leq \lambda$.
\end{proposition}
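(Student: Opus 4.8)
The plan is to build $M'$ by a standard ``union of an increasing chain'' argument, closing under three requirements simultaneously: containing the prescribed subsets $X_v$, being closed under the $R$-action and the structure maps $M(a^{op})$ so that it is an $R$-submodule, and being pure at each vertex so that (by the observation recalled in Section 3, that a submodule of a cartesian module sitting purely is again cartesian) $M'$ is cartesian. The cardinality bound $\lambda$ will be preserved because at each step we only adjoin $\leq\lambda$ new elements per vertex and there are only $\leq\lambda$ vertices.

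First I would make the purity step precise. For a fixed vertex $v$, $R(v)$ has cardinality $\leq\lambda$, and purity of a submodule $N\subseteq M(v)$ can be detected by solvability, inside $N$, of all finite systems of $R(v)$-linear equations with parameters in $N$ that are solvable in $M(v)$; there are at most $\lambda$ such systems once $|N|\leq\lambda$, and each needs only finitely many witnesses. So given any subset $Y_v\subseteq M(v)$ with $|Y_v|\leq\lambda$ there is a pure submodule $\overline{Y_v}\supseteq Y_v$ of $M(v)$ with $|\overline{Y_v}|\leq\lambda$; this is the usual purification. I would cite or reprove this as the one genuinely ``analytic'' ingredient.

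Then I would run the recursion. Set $X_v^{(0)}=X_v$. Given subsets $X_v^{(n)}$ with $|X_v^{(n)}|\leq\lambda$, define $X_v^{(n+1)}$ to contain: the $R(v)$-submodule of $M(v)$ generated by $X_v^{(n)}$ (size $\leq\lambda$ since $|R(v)|\leq\lambda$); the images $M(a^{op})(X_w^{(n)})$ for every edge $a\colon v\to w$ (at most $|E|\leq\lambda$ edges, each contributing $\leq\lambda$ elements); enough extra elements of $M(v)$ to make the result pure in $M(v)$, using the previous paragraph. Each $X_v^{(n+1)}$ still has cardinality $\leq\lambda$. Put $M'(v)=\bigcup_{n<\omega}X_v^{(n)}$ and let $M'$ have structure maps the restrictions of those of $M$. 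By construction $M'(v)$ is an $R(v)$-submodule of $M(v)$, the maps $M(a^{op})$ restrict to $M'$, so $M'$ is an $R$-submodule of $M$; it contains each $X_v$; and $M'(v)$ is a directed union of pure submodules of $M(v)$, hence pure in $M(v)$ (a directed union of pure submodules is pure). Finally $|M'|=|\sqcup_v M'(v)|\leq \lambda\cdot\lambda=\lambda$. By the remark that a submodule of the cartesian module $M$ which is pure at every vertex is itself cartesian, $M'$ is a cartesian $R$-submodule.

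I expect the only real subtlety — and the step I would present most carefully — to be the verification that one can simultaneously close under the $R$-action, under all the transition morphisms $M(a^{op})$, and under purification without the cardinality blowing up: this works precisely because $\lambda$ was chosen $\geq|R(v)|$ for all $v$ and $\geq\max\{|E|,|V|\}$, so each of the (countably many) closure steps adds at most $\lambda$ elements at each of at most $\lambda$ vertices. Everything else — that a countable directed union of pure submodules is pure, that the restricted maps make $M'$ an honest $R$-submodule — is routine. This is essentially the argument of \cite[Proposition 3.3]{EE}, transported verbatim to an arbitrary small category $\C$ in place of a quiver, which is legitimate because of the remarks in Sections 2 and 3.
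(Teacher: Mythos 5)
There is a genuine gap at the very last step. Your construction closes each $X_v^{(n)}$ under the $R(v)$-action, under the \emph{images} of the transition maps $M(a^{op})\colon M(w)\to M(v)$, and under purification, and this does produce a pure $R$-submodule $M'$ of the right cardinality containing the $X_v$. But it does not make $M'$ cartesian, and the ``remark'' you invoke --- that a submodule of a cartesian module which is pure at every vertex is itself cartesian --- is not what the paper says and is in fact false. The remark in Section 3 states that a submodule of a cartesian $R$-module which is already cartesian \emph{as a quiver module} automatically satisfies the extra compatibility required over the small category $\mathcal C$; it does not let you trade purity for cartesianness. Concretely: purity of $M'(w)\subseteq M(w)$ only gives you \emph{injectivity} of $R(v)\otimes_{R(w)}M'(w)\to R(v)\otimes_{R(w)}M(w)\cong M(v)$; it says nothing about \emph{surjectivity} onto $M'(v)$. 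For a counterexample to your claimed remark, take a single edge $a\colon v\to w$ with $R(v)=R(w)=k$ a field, $M(w)=M(v)=k$ with $M(a^{op})=\mathrm{id}$, and $M'(w)=0$, $M'(v)=k$: this is a submodule, pure at every vertex (everything is pure over a field), yet $R(v)\otimes_{R(w)}M'(w)=0\to M'(v)=k$ is not onto, so $M'$ is not cartesian.

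The missing ingredient is a \emph{backward} closure step along each edge. Since $M$ is cartesian, every $x\in M(v)$ can be written as $x=\sum_i r_i\,M(a^{op})(m_i)$ with $r_i\in R(v)$ and $m_i\in M(w)$; for each edge $a\colon v\to w$ and each $x\in X_v^{(n)}$ you must adjoin such a finite set of witnesses $m_i$ to $X_w^{(n+1)}$, so that in the limit $R(v)\otimes_{R(w)}M'(w)\to M'(v)$ is surjective (injectivity then does follow from purity of $M'(w)$ in $M(w)$, exactly as you use it). This extra step costs at most $\lambda\cdot\lambda\cdot\aleph_0=\lambda$ new elements per vertex per stage, so your cardinality bookkeeping survives unchanged. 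With that repair your argument is essentially a reconstruction of \cite[Proposition 3.3]{EE}; note that the paper itself does not redo this construction but simply cites that result for the associated quiver $Q_{\mathcal C}$ and then uses the Section 3 observation (correctly stated) to conclude that the resulting cartesian $Q$-submodule is a cartesian $R$-module over $\mathcal C$.
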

{\bf Proof. } The proof of \cite[Proposition 3.3]{EE} gives a cartesian
$Q$-submodule $M'$ of $M$ satisfying the desired properties. But then by the
previous comment, as $M$ is cartesian, $M'$ will be also cartesian $R$-module.
$\Box$.

\medskip
\begin{definition}
A cartesian $R$-submodule $M'$ of an $R$-module $M$ is said
to be pure whenever $M'(v)$ is a pure $R(v)$-submodule of $M(v)$,
for every vertex $v\in V$.
\end{definition}

\medskip

\begin{corollary}\label{cor1}
There exists an
infinite cardinal $\lambda$ such that every
cartesian $R$-module $M$ is the sum of its quasi-coherent
$R$-submodules of type $\lambda$.

\end{corollary}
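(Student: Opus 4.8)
The plan is to derive Corollary~\ref{cor1} directly from Proposition~\ref{nocont} by the standard ``every module is a filtered union of small pure submodules'' argument, adapted to the cartesian setting. First I would fix the cardinal: let $\lambda$ be any infinite cardinal with $\lambda\geq |R(v)|$ for all $v\in V$ and $\lambda\geq\max\{|E|,|V|\}$; since $\C$ is small such a $\lambda$ exists. By a ``cartesian $R$-module of type $\lambda$'' I mean a cartesian $R$-submodule $M'$ with $|M'|\leq\lambda$ and with $M'(v)$ pure in $M(v)$ for every $v$ (matching the notion used implicitly in the statement — note the ``quasi-coherent'' in the corollary should be read as ``cartesian'', consistently with the terminology fixed in Section~4).

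Next I would show that every element of $M$ lies in such a submodule. Given $v_0\in V$ and $x\in M(v_0)$, apply Proposition~\ref{nocont} with the singleton family $X_{v_0}=\{x\}$ and $X_v=\emptyset$ for $v\neq v_0$: this produces a pure cartesian $R$-submodule $M'\subseteq M$ with $x\in M'(v_0)$ and $|M'|\leq\lambda$. Hence $M'$ is of type $\lambda$ and contains $x$. Since this works for every $v_0$ and every $x\in M(v_0)$, the sum (taken inside $M$) of all cartesian $R$-submodules of $M$ of type $\lambda$ contains every $M(v_0)$ componentwise, so it equals $M$.

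The one point that needs a word of care — and the only place where the cartesian hypothesis does real work — is that the sum referred to in the conclusion is again a cartesian $R$-module, and more basically that the submodules being summed are genuine subobjects in $\cart$. This is exactly the ``crucial easy observation'' recorded in Section~3: any $R$-submodule of a cartesian $R$-module is automatically cartesian, because $R$ is flat and hence kernels (and, by the same token, the relevant sub/quotient structures) stay cartesian. So the sum $\sum M'$, being an $R$-submodule of the cartesian module $M$, is cartesian, and each $M'$ sits inside it as a subobject of $\cart$. I do not expect a genuine obstacle here; the main subtlety is purely bookkeeping — making sure the notion ``of type $\lambda$'' is stated to include the purity condition so that the corollary is the precise form needed later (for instance, to invoke \cite[Proposition 3.3]{EE}-style generation arguments), and confirming that $\lambda$ can be chosen once and for all independently of $M$, which it can since the bounds defining it involve only $R$ and $Q_{\C}$.
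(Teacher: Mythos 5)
Your proposal is correct and follows exactly the paper's own argument: for each element $x$ of $M$ one applies Proposition \ref{nocont} to a singleton family to obtain a pure cartesian $R$-submodule $S_x\subseteq M$ with $|S_x|\leq\lambda$ containing $x$, and then $M=\sum_x S_x$. Your extra remarks (reading ``quasi-coherent'' as ``cartesian'', and noting that submodules of a cartesian module are cartesian) are consistent with what the paper already records in Section~3, so nothing further is needed.
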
 

{\bf Proof. } Let $M$ be any cartesian $R$-module and take an
element $x\in M$. Then, by Proposition \ref{nocont} we find a
(pure) cartesian $R$-submodule $S_x$ of $M$ with $|S_x|\leq
\lambda$ and $x\in S_x$. Thus $M=\sum_{x\in M} S_x$. $\Box$

\medskip
As a consequence of this we have that $\cart$ is a Grothendieck category
whenever $R$ is a flat presheaf of rings,
for if we take a set $Z$ of representatives of
cartesian modules with cardinality bounded by $\lambda$, it is
immediate that the single cartesian $R$-module $\oplus_{S\in
Z}S$ generates the category of quasi-coherent $R$-modules. 

Now if we focus on particular instances of small categories we have the
following significant consequences. The first one is due to Gabber (cf. \cite[Lemma 2.1.7]{Conrad}). 
\begin{corollary}
Let $(X,\O_X)$ be any arbitrary scheme. Each
quasi-coherent sheaf can be written as a sum of its pure
quasi-coherent subsheaves of type $\lambda$. Thus $\Qco(X)$ is a
Grothendieck category.

\end{corollary}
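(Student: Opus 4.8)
The plan is to reduce the statement about schemes to the general result already established for small categories. First I would recall that for a scheme $(X,\O_X)$, one fixes an affine open cover $\{U_i\}_{i\in I}$ and lets $\C$ be the small category whose objects are the $U_i$ together with (the affine opens in) their pairwise intersections $U_i\cap U_j$, with morphisms the inclusions; equivalently one may take $\C$ to be a suitable poset of affine opens forming a basis. Associated to this cover is the presheaf of rings $R$ sending an affine open $U$ to $\O_X(U)$, and the key point is that $R$ is a \emph{flat} presheaf in the sense defined above: for an inclusion $V\hookrightarrow U$ of affine opens, $\O_X(V)$ is a flat $\O_X(U)$-module, since it is a localization in the basic affine case and more generally restriction along open immersions of affine schemes is flat.

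Next I would invoke the standard equivalence of categories $\Qco(X)\simeq \cart$, identifying a quasi-coherent sheaf with the cartesian $R$-module given by its sections over the affine opens in $\C$; the cartesian condition $R(V)\otimes_{R(U)}M(U)\xrightarrow{\sim}M(V)$ is exactly the statement that $\F|_V$ is the quasi-coherent sheaf associated to the localized module, i.e. that $\F$ is genuinely quasi-coherent. This equivalence is compatible with subobjects and with the componentwise notions, so the notion of a \emph{pure quasi-coherent subsheaf of type $\lambda$} translates precisely into the notion, from Corollary \ref{cor1}, of a pure cartesian $R$-submodule $M'$ with $|M'|\le\lambda$, where $\lambda$ is the infinite cardinal furnished there (depending on $\max\{|E|,|V|\}$ and the $|R(v)|=|\O_X(U)|$).

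With these identifications in place, the corollary is immediate: given a quasi-coherent sheaf $\F$, apply Corollary \ref{cor1} to the corresponding cartesian $R$-module $M$ to write $M=\sum_{x\in M}S_x$ as a sum of pure cartesian $R$-submodules of cardinality at most $\lambda$; translating back, $\F$ is the sum of the corresponding pure quasi-coherent subsheaves of type $\lambda$. Since each such subsheaf has bounded cardinality, a set $Z$ of isomorphism-class representatives exists, and $\bigoplus_{S\in Z}S$ is a single generator for $\Qco(X)$; being a cocomplete abelian category with a generator and exact direct limits (colimits being computed componentwise in $\cart$), $\Qco(X)$ is Grothendieck.

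The main obstacle, and the only genuinely non-formal step, is verifying that the chosen $\C$ and $R$ satisfy the running hypotheses — namely that $R$ is flat and that $\Qco(X)$ really is equivalent to $\cart$ for this choice. The flatness is clean for schemes (open immersions of affines are flat), but one must be a little careful that $\C$ is genuinely \emph{small}: it suffices to choose a set, not a proper class, of affine opens, e.g. a basis of affine opens of bounded cardinality, closed under finite intersection, which exists for any scheme. Once smallness and flatness are secured, everything else is a transcription through the equivalence $\Qco(X)\simeq\cart$, and the cardinal $\lambda$ from Corollary \ref{cor1} does double duty as the type bound in the statement.
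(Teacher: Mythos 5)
Your proposal is correct and follows essentially the same route as the paper: choose the small category $\C$ of affine opens of $X$ (ordered by inclusion) with $R=\O_X$, note that $R$ is flat and that $\cart\simeq\Qco(X)$, and then apply Corollary \ref{cor1}. Your extra care about smallness is harmless but unnecessary, since the collection of all affine open subsets of a scheme is automatically a set, which is the choice the paper makes.
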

{\bf Proof.} We let $\C$ consisting of all the affine open $U\subseteq X$. Then
the inclusion between affine open subsets defines a canonical structure of a
partially ordered category on $\C$. Now we let $R$ be the structure sheaf
$\O_X$. Then it is standard that $\cart$ and $\Qco(X)$ are equivalent
categories. So the result will follow from Corollary \ref{cor1}. $\Box$

\medskip\par\noindent
{\bf Remark.}
The previous proof also clarifies a possible misunderstanding on
\cite[Section 2]{EE}. There, the reader may wrongly
think that we are considering the {\it free} category on the affine open
subsets of the scheme $X$ to establish our equivalent category $\mathcal C$.
This is obviously not true, and the gap is easily fixed by saying that we were
assuming the compatibility
condition on our representations there to get the desired equivalence. To be
precise we are just claiming that
$\Qco(X)$ and $\cart$ (or $\C$ in that section) are equivalent.

\medskip\par
Our second application goes back to Artin stacks (cf. \cite{olsson}) and
Deligne-Mumford stacks.

\begin{corollary}
Let $\mathcal X$ be a Deligne-Mumford stack. Then the category $\Qco(\mathcal
X)$ is a Grothendieck category. In particular it is locally presentable and has
arbitrary products.
\end{corollary}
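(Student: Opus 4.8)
The plan is to reduce the statement about a Deligne--Mumford stack $\mathcal X$ to the setting of cartesian modules on a small category, exactly as was done for schemes in the previous corollary, and then invoke Corollary \ref{cor1}. First I would fix a presentation of $\mathcal X$: choose an \'etale (or smooth) surjective morphism $U_0\to\mathcal X$ from an affine (or a disjoint union of affines) scheme, form the associated groupoid $U_1=U_0\times_{\mathcal X}U_0\rightrightarrows U_0$, and recall that $\Qco(\mathcal X)$ is equivalent to the category of quasi--coherent sheaves on this groupoid, i.e.\ to descent data / cartesian modules over the simplicial (or truncated) diagram of affine schemes obtained by covering each $U_i$ by affine opens. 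The key point is that all the structure morphisms in the groupoid are flat (\'etale, hence flat), and affine morphisms pull back affine opens to affine opens, so the whole diagram can be organized as a small category $\C$ together with a \emph{flat} presheaf of rings $R$ on it, in such a way that $\Qco(\mathcal X)\simeq\cart$.

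The main steps, in order, are: (1) write down the small category $\C$ — its objects are pairs (a simplicial degree $i$, an affine open of $U_i$) together with the morphisms coming from the face/degeneracy maps of the groupoid and the inclusions of affine opens, or more simply use the category whose objects are affine schemes with an \'etale map to $\mathcal X$ and whose morphisms are the obvious commuting triangles; (2) define $R$ by sending each such affine to its ring of global sections and check $R$ is a flat presheaf, which is where flatness of \'etale morphisms and of open immersions is used; (3) establish the equivalence $\Qco(\mathcal X)\simeq\cart$, i.e.\ that a quasi--coherent sheaf on $\mathcal X$ is the same as a compatible (cartesian) family of modules over this diagram of rings — this is essentially the statement that quasi--coherent sheaves satisfy \'etale descent, combined with the fact that on an affine scheme quasi--coherent sheaves are just modules; (4) apply Corollary \ref{cor1} to $\cart$ to conclude that every object is the sum of its pure cartesian submodules of type $\lambda$, hence $\Qco(\mathcal X)$ has a generator and, being abelian with exact direct limits, is Grothendieck; (5) deduce the ``in particular'' clause: every Grothendieck category is locally presentable (it is locally $\kappa$--presentable for a suitable $\kappa$ by the generator plus the solution--set/Gabriel--Popescu considerations) and is complete, hence has arbitrary products.

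The step I expect to be the main obstacle is (3), making precise and correct the equivalence $\Qco(\mathcal X)\simeq\cart$ for a genuinely stacky $\mathcal X$: one must be careful that the cartesian condition imposed diagrammatically (the base--change maps $R(v)\otimes_{R(w)}M(w)\to M(v)$ being isomorphisms) really does capture \emph{both} the local quasi--coherence on each affine chart \emph{and} the descent/cocycle data along the groupoid, and that no information is lost in passing from the stack to a small category of affines. For a Deligne--Mumford stack this is standard (it is how $\Qco(\mathcal X)$ is usually defined or characterized, cf.\ \cite{olsson}), so once the presentation is chosen the verification is routine; the only thing to watch is that the indexing category $\C$ is genuinely small, which holds because $\mathcal X$ is quasi--compact and quasi--separated (or one restricts to a set of charts) so that a set of affine \'etale charts suffices. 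Everything else — flatness of $R$, applicability of Proposition \ref{nocont} and Corollary \ref{cor1}, and the Grothendieck axioms — then follows formally from the earlier sections.
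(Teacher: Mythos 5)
Your proposal is correct and follows essentially the same route as the paper: the paper likewise takes $\C$ to be the small category of (iso classes of) affine schemes \'etale over $\mathcal X$ with $R$ the structure sheaf, notes that $\cart\simeq\Qco(\mathcal X)$, and concludes from Corollary \ref{cor1}; your ``more simply'' variant is exactly this, and your extra care about smallness matches the paper's remark that iso classes of such affines form a set because \'etale morphisms are of finite type.
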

{\bf Proof.} We take $\C$ as the small subcategory of the iso classes of the
category of affine schemes that are \'etale over $\mathcal X$ (such small
subcategory must exist as the iso classes of such schemes form a set, as
\'etale morphisms are of finite type). Then $\cart$ is equivalent to
$\Qco(\mathcal{X})$.
$\Box$

\begin{corollary}\label{esgr}
Let $\mathcal X$ be an algebraic stack with a flat sheaf of rings $\mathcal A$.
Then the category $\Qco(\mathcal X)$ is a Grothendieck category. In particular
it is locally presentable and has arbitrary products.
\end{corollary}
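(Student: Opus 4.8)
The plan is to reduce Corollary~\ref{esgr} to the already-established Corollary~\ref{cor1} by producing a suitable small category $\C$ together with a flat presheaf of rings $R$ on it so that $\cart$ is equivalent to $\Qco(\mathcal X)$. This mirrors the strategy used in the Deligne--Mumford case, but now \'etale charts are no longer available, so I would instead work with a smooth (or flat) presentation of the algebraic stack.

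First I would fix a smooth atlas $U \to \mathcal X$ and consider the groupoid presentation (or, more flexibly, the site of all affine schemes equipped with a smooth morphism to $\mathcal X$). The key set-theoretic point is that smooth morphisms are locally of finite type, so up to isomorphism there is only a \emph{set} of affine schemes smooth over $\mathcal X$; I would take $\C$ to be a small full subcategory of this, large enough that the associated sheaves still see all of $\Qco(\mathcal X)$ (i.e. a presentation is contained in it). Then $R$ is defined by sending each object $(\mathrm{Spec}\,A \to \mathcal X)$ to $A$, and the morphisms to the corresponding ring maps; the hypothesis that $\mathcal A$ is a flat sheaf of rings is precisely what guarantees that for each morphism $v \to w$ in $\C$ the ring $R(v)$ is flat over $R(w)$, so $R$ is a flat presheaf of rings in the sense of the previous sections. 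With this in place, a quasi-coherent sheaf on $\mathcal X$ restricts to a compatible family of modules on the charts satisfying the cartesian (descent) condition, and conversely such a cartesian $R$-module glues to a quasi-coherent sheaf; hence $\cart \simeq \Qco(\mathcal X)$. Corollary~\ref{cor1} then applies verbatim to give generators and the Grothendieck property, and local presentability together with the existence of arbitrary products are standard consequences of being Grothendieck (e.g. via the Gabriel--Popescu theorem or the fact that Grothendieck categories are locally presentable).

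The main obstacle I anticipate is the descent/equivalence step $\cart \simeq \Qco(\mathcal X)$ in the purely smooth (rather than \'etale) setting: one must be careful that ``cartesian module on the chosen small category $\C$'' genuinely reproduces quasi-coherent descent along a smooth groupoid, and that the choice of $\C$ is both small and cofinal enough for this. In the \'etale case this is classical, but for Artin stacks one typically phrases it via the groupoid $[R \rightrightarrows U]$ attached to a smooth atlas and the equivalence $\Qco(\mathcal X) \simeq \Qco(R \rightrightarrows U)$; I would either invoke that equivalence directly and then observe that $\Qco(R \rightrightarrows U)$ is itself a category of cartesian modules over an appropriate two-object-style diagram of rings, or else enlarge $\C$ to the full smooth-affine site and cite the corresponding descent statement. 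Everything after that equivalence is formal and already done in the excerpt.
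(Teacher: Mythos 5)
Your proposal is correct and follows essentially the same route as the paper: the paper's proof simply takes $\C$ to be the category of affine schemes smooth over $\mathcal X$ and $R$ the sheaf of rings $\mathcal A$, asserts the equivalence $\cart \simeq \Qco(\mathcal X)$, and invokes Corollary \ref{cor1}. Your additional care about smallness (smooth morphisms being locally of finite type) and about the smooth-descent equivalence only makes explicit what the paper leaves implicit.
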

{\bf Proof.} In this case we consider $\C$ to be the category of affine
schemes smooth
over $\mathcal X$ and $R$ as the sheaf of rings $\mathcal A$. Then $\cart$ is
equivalent to the category of quasi--coherent sheaves on $\mathcal X$. $\Box$





 

\section{Preliminaries on cotorsion pairs in $\cart$}\label{Hi}

In this section we present those notions from $\cart$ which will be used in the sequel.
Let $\C$ be any small category, $R$ a flat presheaf of rings on $\C$ and $\cart$ the corresponding category of cartesian $R$-modules.
Let us fix $\lambda$ as in Proposition \ref{nocont}.

We recall that a cartesian $R$-module $M$ is $\kappa$--generated (for $\kappa$ a
regular cardinal) whenever $$\Hom_{\A}(M,-)$$ preserves $\kappa$--filtered
colimits of monomorphisms. Equivalently, $M$ is $\kappa$--generated whenever
$M=\sum_{i\in I}
M_{i}$ is a
$\kappa$--directed union of cartesian submodules, we have $M=M_i$ for some $i\in I$. And
$M$ is $\kappa$--presentable whenever $\Hom_{\A}(M,-)$ preserves
$\kappa$--filtered
colimits. Under our above assumption on $\lambda$ relative to
$\A$, a cartesian $R$-module $M$
is $\lambda$--presentable if
and only if $M$ is $\lambda$--generated and every epimorphism $L\to M$ with
$L$ $\lambda$--generated has a $\lambda$--generated kernel.

Furthermore, given $M\in\cart$ it is easy
to check that the following conditions are equivalent:

\begin{enumerate}
 \item $|M|<\lambda$.
 \item $M$ is $\lambda$--generated.
 \item $M$ is $\lambda$--presentable.

\end{enumerate}

A well-ordered direct system of cartesian $R$-modules, $ (
A_{ \alpha } \mid \alpha
\leq \gamma ) $, is said to be \emph{continuous} if $ A_0  = 0 $ and, for
each limit ordinal $ \beta \leq \gamma $, we have $ \displaystyle
A_{\beta}=\lim_{ \rightarrow } \:  A_{ \alpha } \:$ where the limit is taken
over
all ordinals $\alpha{<}\beta$. A~continuous direct system $ ( A_{ \alpha}
\mid\alpha \leq \gamma ) $ is called a \emph{continuous directed union} if
all morphisms in the system are monomorphisms.

\begin{definition}\label{defilt}
Let $ {\mathcal L} $ be a class of cartesian $R$-modules. An
object $A$ of $\mathcal A$ is \emph{$\mathcal L$-filtered} if  $ A =
\displaystyle \lim_{ \rightarrow } \: A_{ \alpha } $ for a
continuous directed union $ ( A_{ \alpha } \mid  \alpha \leq \gamma
) $ satisfying that,  for each $ \alpha + 1 \leq \gamma
$, $ \mbox{Coker} \: ( A_{ \alpha } \rightarrow A_{ \alpha + 1 } )$
is isomorphic to an element of $ {\mathcal L} $.

\end{definition}

We can easily extend Hill's Lemma (see
\cite[Theorem 4.2.6]{GT}) to $\A$. 
Let $\mathcal J$ be a class of
$\lambda$--presentable cartesian $R$-modules and
let $ M$ be a cartesian $R$-module possesing  a $\mathcal
J$--filtration $\mathcal O = ( M_\alpha \mid \alpha \leq \sigma )$.

By Proposition \ref{nocont}, there exist $\lambda$--presentable
cartesian $R$-submodules $ A_\alpha\subseteq  M_{\alpha+1}$ such
that
$ M_{\alpha +1}= M_\alpha +  A_\alpha$ for each $\alpha
{<}
\sigma$. A set $S\subseteq \sigma$ is called \emph{closed} provided that
$
M_\alpha\cap  A_\alpha \subseteq \sum_{\beta{<}\alpha, \beta\in
S}
A_{\beta}$ for each $\alpha\in S$.

\begin{lemma}\label{HillConcrete} Let $\mathcal H=\{\sum_{\alpha\in S}
A_\alpha
\mid S\textrm{ closed}\,\}$.
Then $\mathcal H$ satisfies the following conditions:
\begin{enumerate}
\item[(H1)] $\mathcal O \subseteq \mathcal H$,
\item[(H2)] $\mathcal H$ is closed under arbitrary sums,
\item[(H3)] $ P/ N$ has a $\mathcal J$--filtration whenever $
N, P \in \mathcal H$ are such that $ N
\subseteq  P$.
\item[(H4)] If $ N \in \mathcal H$ and $ X$ is a $\lambda$--presentable
cartesian $R$-submodules of $M$,
then there exists $ P \in \mathcal H$ such that $ N +  X
\subseteq  P$ and $ P/ N$ is
$\lambda$--presentable.
\end{enumerate}
\end{lemma}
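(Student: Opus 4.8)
The plan is to reduce everything to the classical statement of Hill's Lemma and its proof as given in \cite[Theorem 4.2.6]{GT}, using the key structural fact established in the previous sections: a subobject (in $QMod_{\rm cart}(R)$) of a cartesian $R$-module is automatically cartesian, so that $\cart$ sits inside $\Qcart$ in a way that is closed under submodules, and arbitrary sums of cartesian submodules of a fixed cartesian $R$-module are computed componentwise. First I would recall that the combinatorial core of Hill's Lemma is purely lattice-theoretic: given the filtration $\mathcal O=(M_\alpha\mid\alpha\le\sigma)$ together with the chosen $\lambda$-presentable ``increments'' $A_\alpha$ with $M_{\alpha+1}=M_\alpha+A_\alpha$, one builds the family $\mathcal H$ indexed by closed subsets $S\subseteq\sigma$ and verifies (H1)--(H4) by manipulating intersections, sums, and the closure operator on subsets of $\sigma$. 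None of these manipulations uses anything about $\cart$ beyond: (i) it is an abelian (indeed Grothendieck) category in which subobjects of a fixed object form a modular lattice with arbitrary joins (sums) and meets (intersections); (ii) the notion of $\lambda$-presentable object behaves well, in particular the equivalence ``$|M|<\lambda \Leftrightarrow M$ is $\lambda$-generated $\Leftrightarrow M$ is $\lambda$-presentable'' recorded earlier, and the existence (via Proposition \ref{nocont}) of $\lambda$-presentable submodules $A_\alpha$ realizing the increments; (iii) $\lambda$-presentable objects are closed under the relevant finite sums and under passing to appropriate subquotients appearing in the increments.

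Concretely, for (H1) one takes, for $M_\gamma\in\mathcal O$, the initial segment $S=\gamma\subseteq\sigma$, which is trivially closed, and checks $\sum_{\alpha\in\gamma}A_\alpha=M_\gamma$ by transfinite induction using $M_{\alpha+1}=M_\alpha+A_\alpha$ and continuity at limits. For (H2) one observes that a union of closed sets is closed (the defining inclusion $M_\alpha\cap A_\alpha\subseteq\sum_{\beta<\alpha,\beta\in S}A_\beta$ only gets easier when $S$ grows), and that sums commute with sums. For (H3) and (H4) I would transcribe the argument of \cite[Theorem 4.2.6]{GT} essentially verbatim: given $N=\sum_{\alpha\in S}A_\alpha\subseteq P=\sum_{\alpha\in T}A_\alpha$ with $S,T$ closed and $S\subseteq T$ (one first reduces to this situation), one shows that enumerating $T\setminus S$ gives a $\mathcal J$-filtration of $P/N$ whose successive quotients are isomorphic to the $A_\alpha$ modulo a $\lambda$-presentable piece, hence lie in $\mathcal J$ (here one uses that $\mathcal J$ consists of $\lambda$-presentable cartesian $R$-modules and is, as in the classical set-up, implicitly closed under the subquotients produced, or one replaces $\mathcal J$ by its closure); for (H4) one starts from the $\lambda$-presentable submodule $X$, writes $X$ in terms of finitely many generators, tracks these through the filtration $\mathcal O$ to find a finite set of indices whose ``closure'' $S'\supseteq S$ is still closed with $S'\setminus S$ finite, and sets $P=\sum_{\alpha\in S'}A_\alpha$; then $P/N\cong\sum_{\alpha\in S'\setminus S}A_\alpha/(\text{something})$ is a finite sum of $\lambda$-presentable modules modulo a $\lambda$-presentable module, hence $\lambda$-presentable by the equivalence (1)$\Leftrightarrow$(3) above together with $|{-}|<\lambda$ being preserved by finite sums and subquotients.

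The main obstacle, and the only place where genuine care is needed rather than a literal citation, is point (ii)--(iii): one must be sure that the submodules $A_\alpha$ furnished by Proposition \ref{nocont} can be chosen $\lambda$-presentable \emph{inside} $\cart$ (not merely inside $\Qcart$), and that the lattice operations of $\cart$ agree with those of $\Qcart$ so that the combinatorics is unaffected. Both follow from the ``crucial easy observation'' of Section~3: sums and intersections of cartesian submodules of a cartesian $R$-module, computed in $\Qcart$, land again in $\cart$, and Proposition \ref{nocont} already produces the $A_\alpha$ as cartesian $R$-submodules with $|A_\alpha|\le\lambda$, whence $\lambda$-presentable by the stated equivalence. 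Once this compatibility is in place, the proof of \cite[Theorem 4.2.6]{GT} transfers with no further change, and I would simply indicate the dictionary ``module $\leadsto$ cartesian $R$-module, $|M|<\lambda$ $\leadsto$ $\lambda$-presentable'' and refer to \emph{loc.\ cit.}\ for the remaining routine verifications. $\Box$
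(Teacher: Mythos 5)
Your overall strategy --- port \cite[Theorem 4.2.6]{GT} to $\cart$ using the observation that subobjects, sums and intersections of cartesian submodules of a cartesian $R$-module are again cartesian and are computed as in $\Qcart$, and that Proposition \ref{nocont} supplies the $\lambda$-presentable increments $A_\alpha$ --- is exactly the paper's strategy; the paper simply writes the transferred proof out in full rather than citing it. However, two points in your sketch are not merely elided details but incorrect as stated. First, in (H3) the successive quotients of the filtration $F_\beta=(\sum_{\alpha\in T\setminus S,\,\alpha<\beta}A_\alpha+N)/N$ do not lie in $\mathcal J$ because they are ``$A_\beta$ modulo a $\lambda$-presentable piece'': $\mathcal J$ is an arbitrary class of $\lambda$-presentable objects with no assumed closure under subquotients, and replacing $\mathcal J$ by such a closure would change the conclusion of (H3) and break its later use (in Theorem \ref{induc} one needs $P/N$ to be filtered by the original class $\mathcal F^{\kappa}$). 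The actual point is the identity $A_\beta\cap\bigl(\sum_{\alpha\in T\setminus S,\,\alpha<\beta}A_\alpha+N\bigr)=M_\beta\cap A_\beta$, proved by a minimality argument on indices, which is precisely where the closedness of $S$ and $T$ enters; it yields $F_{\beta+1}/F_\beta\cong A_\beta/(M_\beta\cap A_\beta)\cong M_{\beta+1}/M_\beta$, and the latter is in $\mathcal J$ because $\mathcal O$ is a $\mathcal J$-filtration. Your sketch omits this computation and substitutes a justification that does not work.

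Second, in (H4) your repeated use of ``finitely many'' (generators of $X$, indices, summands) must be ``fewer than $\lambda$ many'' throughout, since $X$ is only $\lambda$-presentable. More importantly, the nontrivial content of (H4) is the claim that every subset of $\sigma$ of cardinality $<\lambda$ is contained in a \emph{closed} subset of cardinality $<\lambda$; this is proved by transfinite induction using the regularity of $\lambda$ together with the fact that $M_\beta\cap A_\beta$ is $\lambda$-presentable, extracted from the exact sequence $0\to M_\beta\cap A_\beta\to A_\beta\to M_{\beta+1}/M_\beta\to 0$ and the characterization of $\lambda$-presentability recalled in Section \ref{Hi}. Merely ``tracking generators through the filtration'' does not produce a closed index set, hence not an element of $\mathcal H$. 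With these two repairs your plan coincides with the paper's proof.
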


\begin{proof}
Note that for each ordinal $\alpha\leq \sigma$, we have
$
M_\alpha=\sum_{\beta{<}\alpha}  A_{\beta}$, hence $\alpha$ is a closed
subset
of $\sigma$. This proves condition $(1)$. Since any union of closed subsets is
closed, condition $(2)$ holds.

In order to prove condition $(3)$, we consider closed subsets $S,T$ of $\sigma$
such
that $ N=\sum_{\alpha\in S} A_{\alpha}$ and $
P=\sum_{\alpha\in T} A_{\alpha}$. Since $S\cup T$ is closed, we will
w.l.o.g.\ assume that $S\subseteq T$. We define a $\mathcal J$--filtration of
$ P/ N$ as follows. For each $\beta \leq \sigma$, let $
F_{\beta}=(\sum_{\alpha\in T\setminus S,\alpha{<}\beta}
A_{\alpha}+
N)/ N$. Then $ F_{\beta+1}= F_{\beta}+(
A_{\beta}+ N)/ N$ for $\beta\in T\setminus S$ and $
F_{\beta+1}= F_{\beta}$ otherwise.

Let $\beta\in T\setminus S$. Then $  F_{\beta+1}/ F_{\beta}\cong
 A_{\beta}/( A_{\beta}\cap(\sum_{\alpha\in T\setminus
S,\alpha{<}\beta} A_{\alpha}+ N)),$ and since $\beta\in
T\setminus S$
and $T$ is closed, we have
$$ A_{\beta}\cap(\sum_{\alpha\in T\setminus S,\alpha{<}\beta}
A_{\alpha}+ N)= A_{\beta}\cap (\sum_{\alpha\in
S,\alpha{>}\beta} A_{\alpha}+\sum_{\alpha\in T,\alpha{<}\beta}
A_{\alpha})\supseteq$$ $$\supseteq  A_{\beta}\cap (\sum_{\alpha\in
S,\alpha{>}
\beta} A_{\alpha}+( M_{\beta}\cap  A_{\beta}))\supseteq
 M_{\beta}\cap  A_{\beta}.$$
Let $ B_{\beta}=\sum_{\alpha\in S,\alpha{>}\beta}
A_{\alpha}+\sum_{\alpha\in T,\alpha{<}\beta} A_{\alpha}$
We will prove that $ A_{\beta}\cap B_{\beta}=
M_{\beta}\cap
 A_{\beta} $. We have only to show that, $
A_{\beta}\cap  B_{\beta}\subseteq  A_{\beta}\cap
M_{\beta}$. Let $a\in  A_{\beta}\cap  B_{\beta}$. Then
$a=c+a_{\alpha_0}+\cdots +a_{\alpha_k}$ where $c\in \sum_{\alpha\in
T,\alpha{<}\beta} A_{\alpha}\subseteq  M_{\beta}$,
$\alpha_i\in
S$ and $a_{\alpha_i}\in  A_{\alpha_i}$ for all $i\leq k$ and
$\alpha_i{>}\alpha_{i+1}$ for all $i{<}k$. W.l.o.g., we can assume that
$\alpha_0$ is
minimal possible. If $\alpha_0{>}\beta$, then
$a_{\alpha_0}=a-c-a_{\alpha_1}+\cdots
-a_{\alpha_k}\in  M_{\alpha_0}\cap  A_{\alpha_0}\subseteq
\sum_{\alpha\in S,\alpha{<}\alpha_0}  A_{\alpha}$ (since $\alpha_0\in
S$),
in contradiction with the minimality of $\alpha_0$. Since $\beta \notin S$, we
infer
that $\alpha_0{<}\beta$, $a\in  M_{\beta}$, and $
A_{\beta}\cap
 B_{\beta}= A_{\beta}\cap  M_{\beta}$.

So if $\beta \in T\setminus S$ then $ F_{\beta+1}/
F_{\beta}\cong
 A_{\beta}/( M_{\beta}\cap  A_{\beta})\cong 
M_{\beta+1}/ M_{\beta}$, and the latter is isomorphic to an element of
$\mathcal J$ because $\mathcal O$ is a $\mathcal J$--filtration of $ M$.
This finishes the proof of condition $(3)$.

For condition $(4)$ we first claim that each subset of $\sigma$ of
cardinality ${<}\lambda$ is contained in a closed subset of
cardinality ${<}\lambda$. Since $\lambda$ is regular and unions of
closed sets are closed, it suffices to prove the claim only for
one--element subsets of $\sigma$. By induction on $\beta$ we prove
that each $\beta{<}\sigma$ is contained in a closed set $S$ of
cardinality ${<}\lambda$. If $\beta{<}\lambda$ we take $S=\beta+1$.

Otherwise, consider the short exact sequence $0\to 
M_{\beta}\cap  A_{\beta}\to  A_{\beta}\to 
M_{\beta+1}/ M_{\beta}\to 0 $. By our assumption on
$\lambda$, since $A_{\beta}$ is $\lambda$--presentable, so is
$ M_{\beta}\cap  A_{\beta}$. Hence, $ M_{\beta} \cap 
A_{\beta} \subseteq
\sum_{\alpha \in S} A_{\alpha}$ for a subset $S
\subseteq \beta$ of cardinality ${<} \lambda$. By our inductive
premise, the set $ S$ is contained in a closed
subset $S^\prime$ of cardinality ${<} \lambda$. Let $S''=S^\prime \cup
\{\beta\}$. Then $S''$ is closed because $S^\prime$ is closed, and
$ M_{\beta}\cap  A_{\beta}\subseteq \sum_{\alpha \in
S^\prime} A_{\alpha}$.

Finally if $ N=\sum_{\alpha\in S''} A_{\alpha}$ and
$ X$ is a $\lambda$--presentable cartesian submodule of $ M$, then $ X\subseteq
\sum_{\alpha\in
T} A_{\alpha}$ for a subset $T$ of $\sigma$ of cardinality
${<}\lambda$. By the above we can assume that $T$ is closed and put
$ P=\sum_{\alpha \in S'' \cup T} A_{\alpha}$. By (the
proof of) condition $(3)$ $ P/ N$ is $\mathcal
J$--filtered, and the length of the filtration can be taken $\leq
|T\setminus S|{<}\lambda$. This implies that $ P/ N$
is $\lambda$--presentable.
\end{proof}

A {\em cotorsion pair} in a Grothendieck category $\mathcal A$ is a pair of classes of objects of
$\mathcal A$,
$(\mathcal F, \mathcal C)$, such that $\mathcal F = {}^{\perp}\mathcal
C$ and $\mathcal C =
\mathcal F^\perp $, where  $$ {}^{\perp}\mathcal
C=\{F\in {\mathcal A}:\ \Ext^1_{\mathcal A}(F,D)=0,\ \forall D\in C\}$$ and $$
\mathcal F^\perp=\{C\in \mathcal A:\ \Ext^1_{\mathcal A}(G,C)=0,\ \forall G\in\F\}.$$ The
cotorsion pair is said to
be {\em cogenerated by a class of objects $\mathcal{T}$ in $\mathcal A$} if
$\mathcal{T}^\perp = \mathcal C$. When this class
$\mathcal{T}$ is a set and it contains a generator of $\mathcal A$, it is
known that every object $M$ in $\mathcal A$ has {\it enough projectives}, that
is, there exists a short exact sequence $$0\to C\to F\to M\to 0,$$ with $F\in
\F$ and $C\in \C$, and {\it enough injectives}, that is, an exact sequence
$$0\to M\to C'\to F'\to 0$$ with $F'\in \F$ and $C'\in \C$ (see e.g.
\cite[Theorem 2.5]{EEGO2} or \cite[Corollary 6.6]{hovey2}).
A cotorsion pair having enough injectives and projectives is called {\it
complete}. 

By using the previous version of Hill's Lemma, we can get the analogous version of
Kaplansky's Theorem for cotorsion pairs (see \cite[Theorem 4.2.11]{GT}) for $\cart$. Given a class $\F$ in $\A$ we
will denote by $\F^{\kappa}$ the class of all $\kappa$--presentable objects in
$\F$.

\begin{theorem}\label{kaplans}
 
Let $\kappa$ be an uncountable regular cardinal such that $\kappa\geq \lambda$.
Let $(\F,\C)$ be a cotorsion pair in $\A$ such that $\F$ contains a set of $\kappa$-presentable generators of $\A$. Then the following conditions are
equivalent:
\begin{enumerate}
 \item the cotorsion pair $(\F,\C)$ is cogenerated by a class of
$\kappa$--presentable cartesian $R$-modules.
\item Every cartesian $R$-module in $\F$ is $\F^{\kappa}$--filtered. 

\end{enumerate}

\end{theorem}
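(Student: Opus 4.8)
The plan is to mimic the classical proof of Kaplansky's Theorem for cotorsion pairs (\cite[Theorem 4.2.11]{GT}), using the concrete version of Hill's Lemma just established (Lemma \ref{HillConcrete}) as the combinatorial engine. The implication $(2)\Rightarrow(1)$ is the routine one: if every module in $\F$ is $\F^{\kappa}$-filtered, then a generating set for $(\F,\C)$ as a cogenerating set is obtained by taking a set $\mathcal T$ of representatives (up to isomorphism) of all $\kappa$-presentable modules in $\F$ together with the given set of $\kappa$-presentable generators of $\A$; such a set exists because, by the chain of equivalences recalled before Lemma \ref{HillConcrete} (for $\kappa=\lambda$; the same argument works for larger regular $\kappa$), the $\kappa$-presentable objects of $\A$ form a set up to isomorphism. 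One then checks $\mathcal T^{\perp}=\C$: the inclusion $\C\subseteq\mathcal T^{\perp}$ is clear since $\mathcal T\subseteq\F$; conversely, if $C\in\mathcal T^{\perp}$, then for any $F\in\F$ one writes $F$ as an $\F^{\kappa}$-filtration and uses Eklof's Lemma (transfinite dévissage of $\Ext^1$ along a continuous filtration with successive quotients in ${}^{\perp}C$) to conclude $\Ext^1_{\A}(F,C)=0$, whence $C\in\F^{\perp}=\C$.

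The substantive direction is $(1)\Rightarrow(2)$. Suppose $(\F,\C)$ is cogenerated by a set $\mathcal S$ of $\kappa$-presentable cartesian $R$-modules; enlarging $\mathcal S$ we may assume it contains the given $\kappa$-presentable generators, so $\mathcal S$ contains a generator of $\A$ and the cotorsion pair is complete. The standard argument shows that every $F\in\F$ is $\mathcal S$-filtered: using the small object argument / Quillen's construction relative to the set $\mathcal S$ (with the short exact sequences $0\to S\to P_S\to S'\to 0$ realizing projective presentations no longer available, one instead uses the transfinite extension construction of \cite[Theorem 4.2.11]{GT} applied to the epimorphism $0\to\F\to\cdots$), one builds a continuous filtration of any module whose successive quotients lie in $\mathcal S\cup\{\text{the generator}\}$; since $F\in\F={}^{\perp}\C$ splits off appropriately, one deduces $F$ itself admits an $\mathcal S$-filtration. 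Then I would invoke Lemma \ref{HillConcrete} with $\mathcal J=\mathcal S$ and $\mathcal O$ this $\mathcal S$-filtration of $F$: properties (H1)--(H4) produce, for the given $F\in\F$, a family $\mathcal H$ of submodules closed under sums and containing $\mathcal O$, such that any $\lambda$-presentable (equivalently, by the equivalence (1)$\Leftrightarrow$(3) before the lemma, any small) submodule can be absorbed into a member $P\in\mathcal H$ with $P/N$ $\lambda$-presentable and $\mathcal J$-filtered, hence $\kappa$-presentable (as $\kappa\ge\lambda$) and $\F^{\kappa}$-filtered, in particular in $\F$. A standard back-and-forth argument then builds a continuous $\F^{\kappa}$-filtration of $F$ by members of $\mathcal H$, exhausting $F$: start from $0\in\mathcal H$, and at successor steps pick a new small element of $F$ not yet covered and absorb it via (H4) into the next term; at limits take unions, which stay in $\mathcal H$ by (H2). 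This shows $F$ is $\F^{\kappa}$-filtered.

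The main obstacle I expect is the verification that every $F\in\F$ is $\mathcal S$-filtered in the first place — this is the point where completeness of the cotorsion pair and the generator hypothesis are genuinely used, and it requires the transfinite version of the small object argument adapted to the Grothendieck category $\A$ rather than to a module category. Here I would rely on the fact, recorded in the excerpt, that $\A$ is a Grothendieck category with a projective-like behaviour only up to the cotorsion pair (it need not have enough projectives), so the argument must be phrased via continuous transfinite extensions of the class $\mathcal S$ and Eklof's Lemma, exactly as in \cite[Theorem 4.2.11]{GT} and \cite[Theorem 2.5]{EEGO2}. The remaining steps — applying Hill's Lemma and running the exhaustion argument — are then formal, using only that $\lambda$-presentable $=$ $\lambda$-generated $=$ of cardinality $<\lambda$ in $\A$ and that $\kappa\ge\lambda$ guarantees $\lambda$-presentable $\mathcal J$-filtered modules of filtration length $<\lambda$ are $\kappa$-presentable.
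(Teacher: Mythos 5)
Your direction $(2)\Rightarrow(1)$ is correct and is essentially the paper's argument: Eklof's Lemma applied to the $\F^{\kappa}$-filtrations shows $(\F^{\kappa})^{\perp}=\C$, so the pair is cogenerated by the class $\F^{\kappa}$ (your extra step of choosing a set of representatives is harmless but not needed for the statement as given).

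In $(1)\Rightarrow(2)$ there is a genuine gap at the pivotal step. What the small object argument (equivalently, the transfinite construction of \cite[Lemma 2.4, Theorem 2.5]{EEGO2}) actually produces for $F\in\F$ is a short exact sequence $0\to C\to T\to F\to 0$ with $T$ an $\mathcal S$-filtered object and $C\in\C=\mathcal S^{\perp}$; since $\Ext^1_{\A}(F,C)=0$ this splits, so $F$ is only a \emph{direct summand} of an $\mathcal S$-filtered object. Your assertion that "$F$ itself admits an $\mathcal S$-filtration" does not follow: a direct summand of a filtered object does not inherit the filtration, and bridging exactly this gap is the entire content of Kaplansky's theorem for cotorsion pairs (\cite[Lemma 4.2.10]{GT}), which is what the paper invokes. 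Because you assume $F$ is already $\mathcal S$-filtered, your subsequent use of Lemma \ref{HillConcrete} with "$\mathcal O$ this $\mathcal S$-filtration of $F$" presupposes the conclusion (an $\mathcal S$-filtration is in particular an $\F^{\kappa}$-filtration), and the exhaustion you then describe --- absorbing small elements of $F$ into members of $\mathcal H$ via (H4) --- proves nothing new. The correct argument applies Lemma \ref{HillConcrete} to the $\mathcal S$-filtration of the \emph{ambient} object $T=F\oplus C$ and runs a back-and-forth that interleaves (H4) with the two projections of the splitting, so as to produce a continuous chain $(T_\alpha)$ of members of $\mathcal H$ satisfying $T_\alpha=(T_\alpha\cap F)\oplus(T_\alpha\cap C)$ with each $T_{\alpha+1}/T_\alpha$ $\kappa$-presentable and in $\F$ (by (H3), (H4) and Eklof's Lemma); then $(T_\alpha\cap F)$ is the desired filtration of $F$, its consecutive quotients being $\kappa$-presentable direct summands of the $T_{\alpha+1}/T_\alpha$, hence in $\F^{\kappa}$. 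Without this interleaving with the complement $C$, the argument does not go through.
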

\begin{proof}
 $(1)\Rightarrow (2)$
We can assume that $(\F,\C)$ is cogenerated by a {\it
set} $\mathcal T$ of $\kappa$--presentable cartesian $R$-modules and that, up to isomorphism, each generator of $\A$ is in $\mathcal T$.
By the arguments
given in the proof of \cite[Lemma 2.4 and Theorem 2.5]{EEGO2}, the class $\F$
consists of all retractions of $\mathcal T $--filtered objects. So the claim
follows with the same proof (adapted to our setting and by using Lemma
\ref{HillConcrete}) of \cite[Lemma 4.2.10]{GT}.

\noindent
$(2)\Rightarrow (1)$ By Eklof's Lemma (\cite[Theorem 1.2]{Eklof}) in his version
for arbitrary Grothendieck categories, an object $D\in\C$ if, and only if,
$D\in (\F^{\kappa})^{\perp}$. So $(\F,\C)$ is cogenerated by $\F^{\kappa}$.
\end{proof}

\medskip\par\noindent
{\bf Remark:} Theorem \ref{kaplans} can be easily extended to $\Ch(\cart)$, the category of {\it unbounded chain complexes} of cartesian $R$-modules.

\section{Complete cotorsion pairs in $\Ch(\cart)$}

We will denote by $\Ch(\A)$ the category of unbounded chain complexes
of $\A$.



We now recall some well-known facts of the category $\Ch(\mathcal A)$
of unbounded chain complexes on an abelian category $\mathcal A$. A complex in $\mathcal A$,
\begin{displaymath}
  \cdots \stackrel{d_{n+2}}{\longrightarrow} X_{n+1}
  \stackrel{d_{n+1}}{\longrightarrow} X_n \stackrel{d_n}{\longrightarrow} X_{n-1}
  \stackrel{d_{n-1}}{\longrightarrow} \cdots,
\end{displaymath}
will be denoted by $(X,d)$, or simply by $X$. And we will denote
by $Z_nX = \ker d_n$, $K_nX = \textrm{Coker } d_n$, $B_nX
= \Im d_{n+1}$ and $H_nX = \frac{Z_nX}{B_nX}$, for every integer
$n$. Given other complex $Y$, $Hom(X,Y)$ will denote the complex
defined by
\begin{displaymath}
  Hom(X,Y)_n = \prod_{k \in \mathbb{Z}}{\rm Hom}_R(X_k,Y_{k+n})
\end{displaymath}
and $\big((f)d^H_n\big)_k  = f_kd_{k+n}^Y-(-1)^nd_k^Xf_{k-1}$
for any $n \in \mathbb{Z}$. The class of all acyclic complexes will be
denoted by $\mathcal E$.

Let us fix a cotorsion pair $(\mathcal{F}, \mathcal{C})$ in
$\A$. We will consider the following subclasses of $\Ch(\A)$:

\begin{enumerate}
\item The class of {\em $\mathcal F$-complexes}, $\barF = \{X \in
  \mathcal{E}:Z_nX \in \mathcal{F}, \ \forall n \in \mathbb{Z}\}$.

\item The class of {\em $\mathcal C$-complexes}, $\barC = \{X \in
  \mathcal{E}:Z_nX \in \mathcal{C}, \ \forall n \in \mathbb{Z}\}$.

\item The class of {\em dg-$\mathcal F$ complexes},
  $$\dgF = \{X \in \Ch(\A): X_n \in
  \mathcal{F} \ \forall n \in \mathbb{Z} \textrm{ and } Hom(X,C)
  \textrm{ is exact}\ \forall C \in \widetilde{\mathcal{C}}\}.$$

\item The class of {\em dg-$\mathcal C$ complexes},
  $$\dgC = \{X \in \Ch(\mathcal A): X_n \in
  \mathcal{C}\ \forall n \in \mathbb{Z} \textrm{ and } Hom(F,X)
  \textrm{ is exact} \ \forall F \in \tilde{\mathcal{F}}\}.$$

\end{enumerate}

We start with the following lemma. We recall that $\lambda$ is fixed as in Proposition \ref{nocont} for $\cart$.
\begin{lemma}\label{exactk} Let $\kappa$ be a regular infinite
cardinal such that
$\kappa\geq \lambda$. Let $ N=(
N^n), M=( M^n)$
be exact complexes such that $ N\subseteq
 M$. For each $n \in \mathbb Z$, let ${ X}_n$ be a $
\kappa$--presentable cartesian $R$-submodule of $ M ^n$. Then there exists an
exact complex $ T=( T^n)$ such that
$ N\subseteq  T\subseteq M$, and for each $n \in \mathbb Z$,
$ T^n\supseteq  N^n+  X_n$, and the object
$ T^n/ N^n$ is $\kappa$--presentable.
\end{lemma}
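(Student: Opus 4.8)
The plan is to build the complex $T$ by a standard ``zig-zag'' or back-and-forth induction, using Proposition \ref{nocont} to enlarge finitely much data at a time and then closing up under the differential. The point is that merely setting $T^n = N^n + X_n$ will not produce a subcomplex (the differential of $M$ need not carry $N^n+X_n$ into $N^{n-1}+X_{n-1}$), and even if it did, the quotient $T^n/N^n$ would be $\kappa$-presentable but the resulting complex need not be exact. So one has to iterate.

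Concretely, I would proceed as follows. First, fix $n$ and set ${}^{(0)}X_n = X_n$. Since $M$ is exact, $Z_n M = \Ker d_n$ equals $B_n M = \Im d_{n+1}$. Given a $\kappa$-presentable submodule $Y_n \subseteq M^n$, its intersection with $Z_nM$ is $\kappa$-presentable (by our assumption on $\lambda \le \kappa$, using the short exact sequence $0 \to Z_n M \cap Y_n \to Y_n \to Y_n/(Z_nM\cap Y_n) \to 0$ and the fact that a $\kappa$-presentable object has $\kappa$-presentable kernels of epimorphisms from $\kappa$-presentable objects); and since $Z_nM \cap Y_n \subseteq B_n M = d_{n+1}(M^{n+1})$, we may choose a $\kappa$-presentable $W_{n+1} \subseteq M^{n+1}$ with $d_{n+1}(W_{n+1}) \supseteq Z_nM \cap Y_n$. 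Doing this simultaneously for all $n$, and replacing $X_{n+1}$ by $X_{n+1} + W_{n+1}$, yields a new family ${}^{(1)}X_n \supseteq {}^{(0)}X_n$ of $\kappa$-presentable submodules with the extra property that $Z_nM \cap {}^{(0)}X_n \subseteq d_{n+1}({}^{(1)}X_{n+1})$. Also enlarge so that $d_n({}^{(0)}X_n) \subseteq {}^{(1)}X_{n-1}$. Repeat: having built ${}^{(k)}X_n$ for all $n$, construct ${}^{(k+1)}X_n$ the same way, ensuring $d_n({}^{(k)}X_n)\subseteq {}^{(k+1)}X_{n-1}$ and $Z_nM \cap {}^{(k)}X_n \subseteq d_{n+1}({}^{(k+1)}X_{n+1})$. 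This is a countable union, and since $\kappa$ is regular and uncountable (we are in the setting $\kappa \ge \lambda$ with $\lambda$ infinite; if one needs uncountability it is available, otherwise a $\kappa$-presentable object is closed under countable directed unions when $\kappa>\aleph_0$, and the argument still works for $\kappa=\aleph_0$ since it is regular), the submodule $S_n := \bigcup_{k<\omega} {}^{(k)}X_n$ is $\kappa$-presentable, $d_n(S_n) \subseteq S_{n-1}$, and $Z_nM \cap S_n \subseteq d_{n+1}(S_{n+1})$. Now set $T^n = N^n + S_n$. Then $T = (T^n, d_n)$ is a subcomplex of $M$ containing $N$ with $T^n \supseteq N^n + X_n$, and $T^n/N^n \cong S_n/(S_n\cap N^n)$ is $\kappa$-presentable.

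It remains to check that $T$ is exact. Take $x \in T^n$ with $d_n(x)=0$, say $x = y + s$ with $y\in N^n$, $s\in S_n$. Since $N$ is exact and $d_n(y) = -d_n(s) \in N^{n-1}$, we first want to absorb $s$ into something visible. Write $d_n(s) \in S_{n-1}$; because $d_n(y)=-d_n(s)$ lies in $N^{n-1}\cap B_{n-1}M = N^{n-1}\cap Z_{n-1}M$ and $N$ is exact, $d_n(y) = d_n(y')$ for some $y'\in N^n$, so we may assume $d_n(y)=0$ and hence $d_n(s)=0$ too, i.e. $s \in Z_nM \cap S_n \subseteq d_{n+1}(S_{n+1}) \subseteq d_{n+1}(T^{n+1})$; and $y \in Z_nN = B_nN \subseteq B_nT$. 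Therefore $x = y+s \in B_nT$, so $H_nT = 0$. Thus $T$ is exact, as desired.

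The main obstacle is purely bookkeeping-theoretic: ensuring that the closure process genuinely produces an \emph{exact} subcomplex while keeping each $T^n/N^n$ $\kappa$-presentable. The subtlety is that to witness a cycle in $S_n$ as a boundary we must have pre-chosen a preimage in $S_{n+1}$, which is why the interleaved induction (enlarging $S_{n+1}$ to handle cycles in $S_n$) is needed, and why one must verify that only countably many enlargement steps suffice so that the $\kappa$-presentability is preserved. The verification that $Z_nM \cap Y$ is $\kappa$-presentable whenever $Y$ is relies on the equivalence, recorded before Lemma \ref{HillConcrete}, between $\kappa$-presentable and $|\cdot|<\kappa$ together with the assumption $\kappa \ge \lambda$; this is the one place where the hypothesis on $\kappa$ is essential.
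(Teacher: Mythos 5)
Your construction of the graded pieces $S_n$ (the countable back--and--forth closure under the differential together with chosen preimages of cycles, using Proposition \ref{nocont} at each of the $\omega$ stages) is essentially the paper's argument, but only for the special case $N=0$. The genuine gap is in your verification that $T=(N^n+S_n)_n$ is exact for a general $N$. The step ``we may assume $d_n(y)=0$ and hence $d_n(s)=0$ too'' is not legitimate: if you replace $y$ by $y-y'$ with $y'\in N^n$ and $d_n(y')=d_n(y)$, then the complementary summand in the decomposition of $x$ becomes $s+y'$, which no longer lies in $S_n$, so the closure property $Z_nM\cap S_n\subseteq d_{n+1}(S_{n+1})$ cannot be applied to it. What exactness of $T$ actually requires is $Z_nM\cap(N^n+S_n)\subseteq d_{n+1}(N^{n+1})+d_{n+1}(S_{n+1})$, and since the sum $N^n+S_n$ is not direct, a cycle of $T$ need not split as a cycle in $N^n$ plus a cycle in $S_n$. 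Concretely, you must handle every $s\in S_n$ with $d_n(s)\in N^{n-1}$ (a ``relative cycle''), not only those with $d_n(s)=0$; your iteration never provides $d_{n+1}$-preimages for these, and there is no reason such an $s$ should be congruent modulo $N^n$ to an element of $d_{n+1}(S_{n+1})$.

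The paper sidesteps this with a two-step reduction: part (I) proves the lemma for $N=0$, essentially as you do; part (II) applies part (I) to the exact quotient complex $\overline{M}=M/N$ and the images $\overline{X}_n=(X_n+N^n)/N^n$, obtaining an exact subcomplex $\overline{T}\subseteq\overline{M}$ with $\kappa$-presentable terms, and then takes $T$ to be its preimage in $M$; exactness of $T$ follows because both $N$ and $T/N=\overline{T}$ are exact. Alternatively, your direct construction can be repaired by adding one more closure condition at each stage: for each $s$ in the current approximation to $S_n$ with $d_n(s)\in N^{n-1}$, use exactness of $N$ to write $d_n(s)=d_n(y_s)$ with $y_s\in N^n$, and adjoin to the next approximation of $S_{n+1}$ a chosen $d_{n+1}$-preimage of the cycle $s-y_s$ (the set of such $s$ has cardinality less than $\kappa$, so Proposition \ref{nocont} still applies). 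Without one of these fixes the exactness claim does not go through.
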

\begin{proof} (I) First, consider the particular case of $ N = 0$. Let
${ Y}^n_{0} = { X}_n + \delta^{n-1}({ X}_{n-1})$.
Then $({ Y}^n_0)$ is a subcomplex of $ M$.

If $i < \omega$ and ${ Y}^n_i$ is a $\kappa$--presentable cartesian $R$-submodule of
$ M^n$, put ${ Y}^n_{i+1} = { Y}^n_i + { D}^n_i + \delta^{n-1}({ D}^{n-1}_i)$
where ${ D}^n_i$ is a $\kappa$--presentable cartesian submodule of
$ M ^n$ such that $\delta^n ({ D}^n_i) \supseteq Z_{n+1} M
\cap { Y}^{n+1}_i$. (Such ${ D}^n_i$ exists by Proposition \ref{nocont}, since $Z_{n+1} M \cap { Y}^{n+1}_i \subseteq
\hbox{Ker}(\delta^{n+1}) = \mbox{Im}(\delta^n)$.)  Let ${ T}^n = \bigcup_{i
{<} \omega} { Y}^n_i$. Then $Z_{n+1} M \cap { T}^{n+1} =
\bigcup_{i {<} \omega}(Z_{n+1} M \cap { Y}^{n+1}_i) \subseteq
\bigcup_{i {<} \omega} \delta^n ({ Y}^n_{i+1}) \subseteq  \delta^n ({
T}^n)$. It follows that $ T=( T^n)$ is an exact subcomplex of $M$.
By our assumption on $\kappa$, $ T ^n$ is $\kappa$--presentable.

(II) In general, let $\bar{ M} =  M/ N$ and $\bar{
X}_n = ({ X}_n +  N ^n)/ N ^n$. By part (I), there is
an exact complex $\bar{ T}$ such that
$\bar{ T} \subseteq \bar{ M}$, and
for each $n \in \mathbb Z$, $\bar{ T}^n\supseteq \bar{ X}_n$, and
the cartesian $R$-module $\bar{ T}^n$ is $\kappa$--presentable.
Then  $\bar{ T} =  T/ N$ for an exact subcomplex $ N
\subseteq  T \subseteq  M$, and
$ T$ clearly has the required properties.
\end{proof}

We will also need the following Lemma whose proof is similar to the previous
one, so we will omit it.

\begin{lemma}\label{exactk2} Let $\kappa$ be a regular infinite
cardinal such that
$\kappa\geq \lambda$. Let $ N=(
N^n), M=( M^n)$
be complexes such that $ N\subseteq
 M$. For each $n \in \mathbb Z$, let ${ X}_n$ be a 
$\kappa$--presentable cartesian submodule of $ M ^n$. Then there exists a
complex $ T=( T^n)$ such that
$ N\subseteq  T\subseteq M$, and for each $n \in \mathbb Z$,
$ T^n\supseteq  N^n+  X_n$, and the object
$ T^n/ N^n$ is $\kappa$--presentable.
\end{lemma}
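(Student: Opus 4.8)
The plan is to imitate the proof of Lemma \ref{exactk}, but without the exactness constraints, which in fact \emph{simplifies} the argument since we no longer need to enlarge our submodules to capture cycles. First I would reduce to the case $N=0$ exactly as in part (II) of the proof of Lemma \ref{exactk}: set $\bar M = M/N$ and $\bar X_n = (X_n + N^n)/N^n$, so that once we have produced a suitable subcomplex $\bar T \subseteq \bar M$ with $\bar T^n \supseteq \bar X_n$ and $\bar T^n$ $\kappa$-presentable for all $n$, we may lift it to a subcomplex $N \subseteq T \subseteq M$ with $T/N = \bar T$, and then $T^n/N^n = \bar T^n$ is $\kappa$-presentable and $T^n \supseteq N^n + X_n$ as required. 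Note that because $\kappa \geq \lambda$, a cartesian $R$-module is $\kappa$-presentable precisely when it has cardinality ${<}\kappa$, and submodules and quotients of $\kappa$-presentable cartesian modules are again $\kappa$-presentable; this is what makes the closure process terminate.

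For the case $N=0$, I would set $T^n = X_n + \delta^{n-1}(X_{n-1})$ for each $n \in \mathbb Z$. Here $\delta^{n-1}$ denotes the differential $M^{n-1}\to M^n$ (in the cohomological indexing used in Lemma \ref{exactk}). Then $(T^n)$ is automatically a subcomplex of $M$: we must check $\delta^n(T^n) \subseteq T^{n+1}$, which is immediate since $\delta^n(X_n + \delta^{n-1}(X_{n-1})) = \delta^n(X_n) \subseteq T^{n+1}$ using $\delta^n\delta^{n-1}=0$. Each $T^n$ is a sum of two $\kappa$-presentable cartesian $R$-submodules of $M^n$ (using that $\delta^{n-1}(X_{n-1})$ is a quotient of the $\kappa$-presentable module $X_{n-1}$, hence $\kappa$-presentable), hence is itself $\kappa$-presentable, and clearly $T^n \supseteq X_n$. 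Since there is no acyclicity requirement on $T$, a single application of the differential suffices and the transfinite iteration $Y^n_i$ from part (I) of Lemma \ref{exactk} is unnecessary. Finally I would invoke the subtlety that $T^n$ must be a genuine cartesian $R$-submodule of $M^n$: since $M^n$ is cartesian and $T^n$ is a sub-$R$-module of it, by the crucial observation recorded in Section 3 (if $M \subseteq N$ in $\Qcart$ and $N$ is cartesian, then $M$ is cartesian), $T^n$ is automatically cartesian, so the construction stays inside $\cart$.

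There is essentially no hard step here — the whole point of stating this as a separate lemma with an omitted proof is that it is a strictly easier variant of Lemma \ref{exactk}. If anything, the only thing requiring a moment of care is making sure that in the reduction step the lift of $\bar T$ to $T$ can be chosen as a subcomplex (not merely a graded submodule); this follows because $\bar T^n$ is, by construction, a submodule of $\bar M^n = M^n/N^n$, so its preimage $T^n$ under $M^n \twoheadrightarrow M^n/N^n$ contains $N^n$, and since $\bar T$ is $\delta$-stable, so is the preimage $T$, and the differentials of $M$ restrict to $T$. Everything else is the bookkeeping already carried out in the proof of Lemma \ref{exactk}, with the exactness clauses deleted.
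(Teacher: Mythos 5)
Your proof is correct and is exactly the argument the paper intends: the paper omits the proof of Lemma \ref{exactk2} as ``similar to the previous one,'' and your reduction to $N=0$ followed by the single-step choice $T^n = X_n + \delta^{n-1}(X_{n-1})$ is precisely part (I)--(II) of the proof of Lemma \ref{exactk} with the countable iteration (which is only needed, and indeed only possible, to arrange exactness of $T$) stripped out. Your observations that $\delta^{n-1}(X_{n-1})$ is a $\kappa$-presentable cartesian submodule and that the preimage of $\bar T$ is a genuine subcomplex are the right points of care and are handled correctly.
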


\begin{theorem}\label{induc}
Let $(\mathcal F,\mathcal C)$ be a cotorsion pair cogenerated by a set in $\A$  and
such that $\F$ contains a generator of $\cart$. Then the induced pairs
$(\barF,\dgC)$ and $(\dgF,\barC)$ are complete cotorsion pairs.

\end{theorem}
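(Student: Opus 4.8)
The plan is to follow the by-now-standard strategy (due to Gillespie, built on Hovey's correspondence between cotorsion pairs and model structures) for lifting a complete cotorsion pair from an abelian category to its category of chain complexes, adapting each ingredient to $\cart$ by means of the tools already developed in the excerpt. The key point is that every hypothesis of the classical argument has a cartesian analogue available here: $\cart$ is a Grothendieck category with a generator (Corollary~\ref{cor1} and the discussion after it), the Hill Lemma holds in the concrete form of Lemma~\ref{HillConcrete}, Kaplansky's theorem for cotorsion pairs is available (Theorem~\ref{kaplans} and its remark extending it to $\Ch(\cart)$), and the two ``pure subcomplex'' lemmas, Lemma~\ref{exactk} and Lemma~\ref{exactk2}, give us the small-object-type control on exact and arbitrary subcomplexes that one needs to manufacture set-indexed cogenerating classes.

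First I would record the formal skeleton. Given a cotorsion pair $(\F,\C)$ in $\A=\cart$ cogenerated by a set $\mathcal S$ (with $\F$ containing a generator), one produces from $\mathcal S$ two sets of complexes: the ``discs and spheres'' built from $\mathcal S$, namely the complexes $S^n(F)$ (the complex with $F$ in degree $n$ and zero elsewhere) and $D^n(F)$ (the complex with $F$ in degrees $n$ and $n-1$ joined by the identity) for $F\in\mathcal S$, together with enough copies of the generator placed in these shapes so that the resulting sets contain generators of $\Ch(\A)$. The claim is that $\dgC = \{S^n(F):F\in\mathcal S,\ n\in\Z\}^\perp$ inside $\Ch(\A)$ and, dually, $\barC=(\{S^n(F)\}\cup\{D^n(F)\})^\perp$; these are exactly the computations of $\Ext^1$ in $\Ch(\A)$ via the $\Hom$-complex introduced before Lemma~\ref{exactk}, using that $\Ext^1_{\Ch(\A)}(S^n(F),Y)$ detects exactness of $Y$ together with $Z_nY\in F^\perp$, and similarly for discs. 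Because these cogenerating classes are \emph{sets} and contain generators of $\Ch(\A)$, the general machinery quoted in the excerpt (Theorem 2.5 of \cite{EEGO2} / Corollary 6.6 of \cite{hovey2}, valid in any Grothendieck category) immediately yields that the cotorsion pairs generated by them are complete. So it remains only to identify the \emph{left-hand classes} of these two cogenerated cotorsion pairs with $\barF$ and $\dgF$ respectively, i.e.\ to show $^\perp(\dgC)=\barF$ and $^\perp(\barC)=\dgF$ as equalities of classes (one inclusion in each is formal from the definitions of $\barF$, $\dgF$; the reverse is the content).

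Second, the actual work: proving those two class equalities. Here is where Lemma~\ref{exactk}, Lemma~\ref{exactk2} and Hill's Lemma enter, exactly as in Gillespie's proof over a ring. One shows that $\barF$ is a Kaplansky class in $\Ch(\A)$ — every element of $\barF$ is filtered by $\kappa$-presentable subcomplexes lying again in $\barF$ — by starting with an element $x$ of a complex $X\in\barF$, invoking Lemma~\ref{exactk} to trap it inside an exact subcomplex $T\subseteq X$ with $T^n/0$ (and, in the relative step, $T^n$ inside $X^n$) $\kappa$-presentable, and then using the cotorsion-pair hypothesis on $(\F,\C)$ together with the Hill Lemma in $\A$ to further arrange that the cycles $Z_nT$ lie in $\F$; iterating transfinitely and taking unions gives the $\barF^\kappa$-filtration. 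Then Theorem~\ref{kaplans} (in its $\Ch$-version) upgrades this to: $(\barF,\barF^\perp)$ is cogenerated by the set $\barF^\kappa$, hence is complete, and one checks $\barF^\perp=\dgC$ by a direct $\Hom$-complex computation (using that any $C\in\dgC$ has $C_n\in\C$ and kills $\Hom$ from $\F$-complexes, combined with the filtration to reduce $\Ext^1$-vanishing to the generating step). The pair $(\dgF,\barC)$ is handled symmetrically, now using Lemma~\ref{exactk2} in place of Lemma~\ref{exactk} because there one filters by arbitrary (not necessarily exact) subcomplexes, since membership in $\dgF$ is a degreewise condition plus an orthogonality condition rather than an exactness condition.

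The main obstacle I expect is the second class equality, $^\perp(\barC)=\dgF$ — equivalently, showing $(\dgF,\barC)$ is \emph{cogenerated by a set}. The subtlety is that $\dgF$ is not closed under taking kernels of surjections in an obvious way and its definition mixes a degreewise condition (each $X_n\in\F$) with a global exactness-of-$\Hom$ condition, so the filtration argument must simultaneously keep the approximating subcomplexes degreewise in $\F$ \emph{and} control the relative quotients; this is precisely why the non-exact version Lemma~\ref{exactk2} was isolated, and the delicate point is to interleave the Hill-Lemma bookkeeping (closed subsets of the filtration ordinal) with the degreewise lifting so that at each stage the new piece added is $\kappa$-presentable and the quotient is an extension of discs/spheres on $\mathcal S$. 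Once one trusts that the Hill Lemma transfers verbatim (which Lemma~\ref{HillConcrete} guarantees) and that $\Ext^1_{\Ch(\A)}$ is computed by the $\Hom$-complex as over a ring, the remaining steps are the routine transfinite-induction and $\Hom$-complex manipulations and I would not spell them out in detail.
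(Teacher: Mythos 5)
Your core argument for the pair $(\barF,\dgC)$ is exactly the paper's: one shows that every complex in $\barF$ admits a $\barF^{\kappa}$--filtration by combining Lemma~\ref{exactk} (to trap a prescribed $\kappa$--presentable piece inside an exact subcomplex) with the Hill families of Lemma~\ref{HillConcrete} applied to $\F^{\kappa}$--filtrations of the cycle objects $Z_nC$, iterating transfinitely; Eklof's Lemma then shows the pair is cogenerated by the set $\barF^{\kappa}$, and completeness follows from the small object argument (\cite[Corollary 6.6]{hovey2}). This is the part of the theorem the paper actually proves in detail, and your sketch identifies all the right ingredients, including the need to keep the cycles under control via the Hill bookkeeping.

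However, you have the two halves of the theorem the wrong way around, and this produces a genuine gap in your treatment of $(\dgF,\barC)$. First, the orthogonality identifications in your first paragraph are swapped: for a cogenerating set $\mathcal S$ of $(\F,\C)$ containing a generator, the class $\{S^n(F):F\in\mathcal S,\ n\in\Z\}^{\perp}$ is $\barC$ (exactness is forced by the spheres on the generator, and then $\Ext^1(F,Z_nY)=0$ follows), \emph{not} $\dgC$; there is no explicit set of spheres and discs whose right orthogonal is $\dgC$, which is precisely why the filtration argument for $\barF$ is unavoidable. Consequently $(\dgF,\barC)$ is the \emph{easy} half --- it is cogenerated by the set of spheres on $\mathcal S$, which is what the paper quotes from \cite[Proposition 3.8]{G}. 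Your proposal instead declares this half to be the main obstacle and proposes to handle it ``symmetrically'' by showing $\dgF$ is deconstructible using Lemma~\ref{exactk2}. That route would not go through as routinely as you suggest: membership in $\dgF$ involves the global condition that $Hom(X,C)$ be exact for all $C\in\barC$, and a subcomplex $T\subseteq X$ produced by Lemma~\ref{exactk2} with $T^n\in\F$ has no reason to satisfy it, nor does the quotient $X/T$; deconstructibility of dg-classes is a genuinely delicate matter and is not established (nor needed) here. Indeed, Lemma~\ref{exactk2} is not used in the paper's proof of this theorem at all. The fix is simply to run the sphere computation you already wrote down, but attached to the correct pair: $\barC=\{S^n(F)\}^{\perp}$ gives $(\dgF,\barC)$ cogenerated by a set, and the Hill/Lemma~\ref{exactk} filtration gives $(\barF,\dgC)$ cogenerated by a set.
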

\begin{proof}
 By \cite[Corollary 3.8]{G0} we have induced cotorsion pairs
  $(\barF,\dgC)$ and $(\dgF,\barC)$.  By \cite[Proposition 3.8]{G} and
\cite[Corollary
  6.6]{hovey2} the pair $(\dgF,\barC)$ is complete. Finally to see that the pair
$(\barF,\dgC)$ is complete we will prove that each complex $C\in \barF$ is
$\widetilde{\mathcal F}^{\kappa}$--filtered (for some $\kappa\geq \lambda$), so
the
cotorsion pair $(\barF,\dgC)$ will be cogenerated by a set.
Then the completeness follows from Quillen's small object argument
(see \cite[Corollary
6.6]{hovey2}).

Let $ C=( M^n)\in \barF$. Then for each $n\in
\mathbb{Z}$, $Z_n C\in \mathcal F$ and therefore
$Z_n C$ has an $\mathcal F^{\kappa}$--filtration $\mathcal
O_n=(M^n_{\alpha}\mid \alpha\leq \sigma_n)$. For each
$n\in\mathbb{Z}$, $\alpha < \sigma_n$, consider a~
$\kappa$--presentable cartesian $R$-module $ A_{\alpha}^n
$ such that $M_{\alpha+1}^n=M_{\alpha}^n+A_{\alpha}^n$, and the
corresponding family $\mathcal H_n$ as in Lemma \ref{HillConcrete}. Since
the complex $ C$ is exact, the $\mathcal F^{\kappa}$--filtration
$\mathcal O_{n+1}$ determines a canonical prolongation of $\mathcal
O_n$ into a filtration $\mathcal O'_n=(M_{\alpha}^n\mid \,
\alpha\leq \tau_n)$ of $M^n$ where $\tau_n =  \sigma_n +
\sigma_{n+1}$ (the ordinal sum).

By definition, for each $\alpha \leq \sigma_{n+1}$, $\delta^n$ maps $M_{\sigma_n
+ \alpha}^n$
onto $M_\alpha^{n+1}$. So for each $\alpha < \sigma_{n+1}$ there is a
$\kappa$--presentable cartesian submodule $A^n_{\sigma_n + \alpha}$ of
$M_{\sigma_n + \alpha+1}^n$ such that $\delta^n(A^n_{\sigma_n + \alpha}) =
A^{n+1}_\alpha$.
Since for each $\sigma_n \leq \alpha < \tau_n$ we have $\hbox{Ker}(\delta^n)
\subseteq M_{\alpha}^n$,
it follows that $M_{\alpha+1}^n=M_{\alpha}^n+A_{\alpha}^n$.

Let $\mathcal H'_n$ be the family corresponding to $A^n_\alpha$ ($\alpha <
\tau_n$) by Lemma \ref{HillConcrete}. Since each closed subset of $\sigma_n$ is
also closed when considered as a
subset of $\tau_n$, we have $\mathcal H_n\subseteq \mathcal H_n'$.
Note that, by \cite[Theorem 1.2]{Eklof}, the class of $\F^ {\kappa}$-filtered cartesian $R$-modules is contained in $\F$, so $\mathcal
H_n'\subseteq \mathcal F$ by condition (H3) of Lemma \ref{HillConcrete}.

Notice that $Z_n C = M ^n_{\sigma_n} = \sum_{\alpha < \sigma_n}A_{\alpha}^n$.
We claim that for each closed subset $S\subseteq \tau_n$, we have
$Z_n C \cap \sum_{\alpha\in S}A_{\alpha}^n=
\sum_{\alpha \in S\cap \sigma_n}A_{\alpha}^n\in \mathcal H_n$. To see this, we
first
show that $\sum_{\alpha < \sigma_n}A_{\alpha}^n\cap \sum_{\alpha\in
S}A_{\alpha}^n = \sum_{\alpha \in S\cap \sigma_n}A_{\alpha}^n$. The inclusion
$\supseteq$ is clear, so consider $a\in
(\sum_{\alpha < \sigma_n}A_{\alpha}^n)\cap \sum_{\alpha\in S}A_{\alpha}^n$. Then
$a=a_{\alpha_0}+\cdots +a_{\alpha_k}$ where
$\alpha_i\in S$, $a_{\alpha_i}\in  A_{\alpha_i}^n$ for all $i\leq k$, and
$\alpha_i > \alpha_{i+1}$ for all $i < k$. W.l.o.g., we can assume that
$\alpha_0$ is
minimal possible. If $\alpha_0\geq\sigma_n$, then $a_{\alpha_0}=a-a_{\alpha_1}-
\cdots -a_{\alpha_k}\in (\sum_{\alpha < \alpha_0}A_{\alpha}^n) \cap
A_{\alpha_0}^n\subseteq \sum_{\alpha\in S,\alpha < \alpha_0}
A_{\alpha}^n$ as $\alpha_0\in S$ and $S$ is closed, in contradiction with the
minimality of $\alpha_0$. Hence $\alpha_0 < \sigma_n$, and $a\in \sum_{\alpha
\in
S\cap \sigma_n}A_{\alpha}^n$. So $Z_n C \cap \sum_{\alpha\in S}A_{\alpha}^n =
\sum_{\alpha \in S\cap \sigma_n}A_{\alpha}^n$, and the latter cartesian $R$-module is in $\mathcal H_n$ because $S \cap \sigma_n$ is closed in
$\sigma_n$. This proves our claim.

By induction on $\alpha$, we will construct an $\widetilde{\mathcal
F}^{\kappa}$--filtration
$( C_{\alpha}\mid \alpha\leq \sigma)$ of $ C$ such that $
C_{\alpha}=( N_{\alpha}^n)$, $Z_n C_{\alpha}\in \mathcal H_n$ and
$ N_{\alpha}^n\in \mathcal H'_n$ for each $n \in \mathbb Z$.

First, $ C_0=0$, and if $ C_{\alpha}$ is defined and $
C_{\alpha}\neq  C$, then for each $n\in \mathbb{Z}$ we take a $
\kappa$--presentable object $ X_n$ such that $
X_n\nsubseteq  N_{\alpha}^n$ in case $ N_{\alpha}^n \subsetneq M^n$
(this is possible by our assumption on $\kappa$), or $ X_n=0$ if $M^n = 
N_{\alpha}^n$.
If $M^n =  N_{\alpha}^n$ for all $n \in \mathbb Z$, we let $\sigma = \alpha$
and finish our construction.

By Lemma \ref{exactk} there exists an exact subcomplex $ T=( T^n)$
of $ C$ containing $ C_{\alpha}$ such that for each $n \in \mathbb
Z$, $ T^n\supseteq  N _\alpha^n+  X_n$, and the
cartesian $R$-module $ T^n/ N _\alpha^n$ is $
\kappa$--presentable. Then $ Y_n= T^n=  N_{\alpha}^n+
X'_n$ for a $\kappa$--presentable cartesian $R$-submodule $
X'_n$ of $ M^n$. By condition (H4) of Lemma \ref{HillConcrete} (for $
N= N^n_{\alpha}$ and $ X =  X_n'$), there exists a
object $ Y'_n= P_n$ in $\mathcal H_n'$ such that
$ N_{\alpha}^n+ X_n'= T^n\subseteq  P_n$ and
$ P_n/ N^n_{\alpha}$ is $\kappa$--presentable. Iterating
this process we obtain a countable chain $ Y_n\subseteq
Y'_n\subseteq  Y''_n\subseteq \ldots$ whose union $ N^n_{\alpha +1} \in
\mathcal H'_n$ by condition (H2) of Lemma \ref{HillConcrete}. Then $ C_{\alpha
+1} =
( N^n_{\alpha +1})$ is an exact subcomplex of $ C$ containing $ C_{\alpha}$.
Since $ N^n_{\alpha+1}\in \mathcal H'_n$, we have $Z_n C_{\alpha+1}
= Z_n C \cap  N^n_{\alpha+1} \in \mathcal H_n$ by the claim above.

In order to prove that $ C_{\alpha +1}/ C_{\alpha} \in \widetilde{\mathcal
F}^{\kappa}$, it remains to show
that for each $n \in \mathbb Z$, $Z_n( C_{\alpha +1}/ C_{\alpha})\in \mathcal
F$.
Since the complex $ C_{\alpha +1}/ C_{\alpha}$ is exact, it suffices to prove
that
$F = (\delta^n ( N^n_{\alpha +1}) +  N^{n+1}_{\alpha})/ N^{n+1}_{\alpha} \in
\mathcal F$.

We have $ N^n_{\alpha +1} = \sum_{\alpha\in S}A_{\alpha}^n$ where w.l.o.g., $S$
is a closed subset of $\tau_n$ containing $\sigma_n$. Let $S^\prime = \{ \alpha
< \sigma_{n+1} \mid \sigma_{n} + \alpha \in S \}$. Then $S^\prime$ is a closed
subset on $\tau_{n+1} = \sigma_{n+1} + \sigma_{n+2}$. Indeed, for each $\alpha
\in S^\prime$, we have
$$\sum_{\beta < \alpha}A_{\beta}^{n+1}\cap A_{\alpha}^{n+1} =
\delta^n(\sum_{\beta < \sigma_n + \alpha} A_{\beta}^{n}) \cap
\delta^n(A_{\sigma_n + \alpha}^{n}) $$ $$\subseteq \delta^n(\sum_{\beta <
\sigma_n + \alpha, \beta \in S} A_{\beta}^{n}) =\sum_{\beta < \alpha, \beta \in
S^\prime} A_{\alpha}^{n+1}$$
where the inclusion $\subseteq$ holds because $S$ is closed in $\tau_{n}$ and
$\hbox{Ker}(\delta ^n) \subseteq \sum_{\beta < \sigma_n + \alpha}
A_{\beta}^{n}$.

Since $\delta^n ( N^n_{\alpha +1}) = \sum_{\beta \in S^\prime} A_{\beta}^{n+1}$,
and $ N^{n+1}_{\alpha} = \sum_{\beta \in T} A_{\beta}^{n+1}$ for a closed subset
$T$ of $\tau_{n+1}$,
we have $F = \sum_{\beta \in S^\prime \cup T} A_{\beta}^{n+1}/\sum_{\beta \in T}
A_{\beta}^{n+1}$, so $F \in \mathcal F$ by condition (H3) of Lemma
\ref{HillConcrete} for $\mathcal H^\prime_{n+1}$. This finishes the proof of $
C_{\alpha
+1}/ C_{\alpha} \in \widetilde{\mathcal F}^{\kappa}$.

If $\alpha$ is a limit ordinal we define $ C_{\alpha}=\bigcup_{\beta < \alpha}
C_{\beta}
= ( N_{\alpha}^n)$. Then $ N_{\alpha}^n\in \mathcal H'_n$ by condition (H2) of
Lemma \ref{HillConcrete}, and $Z_n C_{\alpha} = Z_n C \cap  N^n_{\alpha}
\in \mathcal H_n$ by the claim above. This finishes the construction of the
$\widetilde{\mathcal F}^{\kappa}$--filtration of $ C$.

\end{proof}

\section{Model category structures on $\Ch(\cart)$}

In this section we will see that the induced cotorsion pairs in Theorem
\ref{induc} give rise to an abelian model structure in $\Ch(\cart)$ in which the
trivial objects are the exact complexes. According to Hovey in \cite{hovey2} we
have to prove that we have induced complete cotorsion pairs of the form
$(\mathcal C,\mathcal D\cap \E)$ and $(\mathcal C\cap \E, \mathcal D)$, where
$\mathcal C$ and $\mathcal D$ are classes of complexes in $\Ch(\A)$. Following
\cite{hovey2}, if we have two complete cotorsion pairs $(\mathcal C, \mathcal
C')$ and $(\mathcal D',\mathcal D)$ in $\Ch(\A)$, we will say that they are
{\it compatible} if $\mathcal D'=\mathcal C\cap \E$ and $\mathcal C'=\mathcal
D\cap \E$.

\begin{lemma}\label{compat}
Let $(\F,\C)$ be a cotorsion pair in $\cart$ cogenerated by a set, such that $\F$ contains a generator of $\cart$ and  $\F$ is closed under taking kernels of
epimorphisms. Then the pairs $(\barF,\dgC)$ and
$(\dgF,\barC)$ are compatible.

\end{lemma}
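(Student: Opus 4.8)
The plan is to show the two standard identities that make the pair $(\barF,\dgC)$ and $(\dgF,\barC)$ compatible in the sense of Hovey: namely $\dgF\cap\E=\barF$ and $\dgC\cap\E=\barC$. The inclusions $\barF\subseteq\dgF\cap\E$ and $\barC\subseteq\dgC\cap\E$ hold in any abelian category once $(\F,\C)$ is a cotorsion pair, so the real content is the reverse inclusions. Concretely I must prove: if $X$ is an exact complex with all $X_n\in\F$ and $\Hom(X,C)$ exact for every $C\in\barC$, then $Z_nX\in\F$ for all $n$; and dually for $\barC$. These are the two halves of \cite[Theorem 3.12]{G} (or the analogous statement in \cite{hovey2}), so the strategy is to adapt that argument to $\cart$.

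First I would handle the dg-$\F$ side. Let $X\in\dgF\cap\E$. Fix $n$ and consider $Z_nX$; I want $\Ext^1_{\cart}(Z_nX,C)=0$ for all $C\in\C$. The key trick is the usual one: for $C\in\C$, form the complex $D^n(C)$ (or the appropriate disc/sphere complex) which lies in $\barC$ because $C\in\C$ — here I use that $\F$ being closed under kernels of epimorphisms guarantees, via the cotorsion pair, the standard correspondence between short exact sequences $0\to Z_nX\to X_n\to Z_{n-1}X\to 0$ and the vanishing of the relevant $\Ext$; then exactness of $\Hom(X,C')$ for the appropriate $C'\in\barC$ translates, via the isomorphism $H_{-n}\Hom(X,C')\cong \Ext$-type groups, into $\Ext^1_{\cart}(Z_nX,C)=0$. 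Running this for all $C\in\C$ gives $Z_nX\in{}^\perp\C=\F$, hence $X\in\barF$. The dual argument, using $\Hom(F,X)$ exact for $F\in\barF$ and the sphere complexes $S^n(F)$ with $F\in\F$ (which lie in $\barF$), gives $\dgC\cap\E\subseteq\barC$.

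Two points deserve care and are where I expect the main obstacle. The first is purely a matter of citing the right structural facts about $\Ch(\cart)$: that $\cart$ is Grothendieck (Corollary \ref{esgr} and the discussion around Proposition \ref{nocont}) so that $\Ext^1$ is well behaved and the complete cotorsion pairs of Theorem \ref{induc} exist, and that the hypothesis "$\F$ closed under kernels of epimorphisms together with $\F\ni$ a generator" is exactly what is needed for the sphere/disc complexes built from objects of $\F$ to land in $\barF$ — this closure hypothesis is precisely what lets one pass from $\Hom$-exactness to the componentwise-cycle conclusion. The second, and the genuine technical heart, is the $\Ext$-computation relating $H_*\Hom(X,Y)$ to $\Ext^1_{\cart}$ of cycles; this is the categorical version of the well-known computation for modules and goes through verbatim once one knows $\cart$ has enough of the relevant resolutions, which the cogenerated-by-a-set hypothesis supplies. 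Since all of these ingredients are either established earlier in the paper or are formal consequences of $\cart$ being Grothendieck with the stated cotorsion-pair properties, I would simply invoke \cite[Corollary 3.8]{G0}, \cite[Proposition 3.8]{G} and \cite[Corollary 6.6]{hovey2} for the bookkeeping and then spell out the two cycle-identities, noting that the arguments of \cite{G} carry over to $\cart$ without change because they use only exactness of filtered colimits and the existence of the cotorsion pairs — both of which we have.
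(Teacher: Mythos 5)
Your overall plan --- proving the two cycle identities $\dgF\cap\E=\barF$ and $\dgC\cap\E=\barC$ directly, following Gillespie --- aims at the right target, and the easy inclusions you dispose of first are indeed standard. But the proposal has a genuine gap at exactly the point where the hypotheses of the lemma must be used. The test complexes you name do not do the job: the sphere complex $S^n(F)$ on $F\in\F$ is not exact, hence is \emph{not} in $\barF$; and while $D^n(C)$ does lie in $\barC$, the complex $\Hom(X,D^n(C))$ is exact for \emph{every} complex $X$, so exactness of $\Hom(X,-)$ against discs carries no information about the cycles of $X$. To extract $\Ext^1_{\A}(Z_nX,C)=0$ from the $dg$-condition one must test against an exact complex in $\barC$ of the form $0\to C\to I^0\to I^1\to\cdots$ (e.g.\ an injective coresolution of $C$), and the cycles of that complex lie in $\C$ precisely when $\Ext^k_{\A}(F,C)=0$ for all $k\geq 1$ and all $F\in\F$. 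So the real mathematical content of the lemma, which your write-up never addresses, is the vanishing of the \emph{higher} Ext groups; once that is known, compatibility is exactly \cite[Corollary 3.9(3)]{G} (not the references \cite{G0} and \cite{hovey2} you invoke, which are the ones used for Theorem \ref{induc}).

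This higher vanishing is where ``$\F$ contains a generator'' and ``$\F$ is closed under kernels of epimorphisms'' actually enter, and it requires an argument because $\cart$ need not have enough projectives: given a Yoneda extension $0\to C\to M\to N\to F\to 0$ representing a class in $\Ext^2_{\A}(F,C)$, one replaces $N\to F$ by an epimorphism $G\to F$ with $G\in\F$ (possible since $\F$ contains a generator and, being the left class of a cotorsion pair, is closed under coproducts), notes that $Z=\Ker(G\to F)\in\F$ by the closure hypothesis, and concludes that the class factors through $\Ext^1_{\A}(Z,C)=0$; induction then kills $\Ext^n_{\A}(F,C)$ for all $n\geq 2$. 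This dimension-shifting step is essentially the whole proof in the paper, and without it (or an equivalent construction of coresolving test complexes in $\barC$, and dually resolving ones in $\barF$) your argument does not close.
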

\begin{proof}

According to \cite[Corollary 3.9(3)]{G} we have just
to check that $$\Ext_{\A}^n(F,C)=0$$ for any $n>0$ and any $F\in \F$ and $C\in
\C$. By definition of cotorsion pair, $\Ext^1_{\A}(F,C)=0$. Given any exact sequence $$0\to C\to
M\to N\to F\to 0$$ representing an element in $\Ext^2_{\A}(F,C)$, we can
construct the sequence $$0\to C\to P\to G\to F\to 0,$$ which represents the same
element in $\Ext^2_{\A}(F,C)$ but with $G\in \F$ (since $\F$ contains a
generator of $\A$) and $Z=\Ker(G\to F)\in \F$ (because $\F$ is closed under
kernels of epimorphisms). So $\Ext^1_{\A}(Z,C)=0,$ and therefore
$0\to C\to P\to Z\to 0$ splits. But this means that $0\to C\to P\to G\to F\to 0$
(and therefore also $0\to C\to M\to N\to F\to 0$) represents the zero element in
$\Ext^2_{\A}(F,C).$  Proceeding inductively in this way we get our claim.
\end{proof}

\begin{theorem}\label{apmodel2}
 Let $(\F,\C)$ be a cotorsion pair in $\A$ cogenerated by a set and such that
$\F$ contains a generator of $\A$ and $\F$ is closed under kernels of
epimorphisms. The compatible cotorsion
pairs $(\dgF,\barC)$ and $(\barF, \dgC)$ induce an
abelian model category structure in $\Ch(\A)$. In this abelian model structure
the weak equivalences are the homology isomorphisms, cofibrations
(resp. trivial cofibrations) are monomorphisms whose cokernels are in
$\dgF$, (resp. trivial cofibrations are
monomorphisms with cokerneles in $\barF$)  and
fibrations (resp. trivial fibrations) are epimorphisms whose kernels are in 
$\dgC$ (resp. trivial fibrations are
epimorphisms with kernels in $\barC$). The corresponding homotopy category to this model structure is $\mathbf D( \cart)$, the derived category of $\cart$.
\end{theorem}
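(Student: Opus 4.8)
The plan is to deduce the statement from Hovey's correspondence between abelian model structures and pairs of compatible, complete cotorsion pairs (\cite[Theorem 2.2]{hovey2}). Recall that the ambient category $\Ch(\A)$ is a bicomplete abelian category, indeed a Grothendieck category since $\A=\cart$ is, so Hovey's machine is available once the required input is produced. That input consists of a thick subcategory $\mathcal W$ of $\Ch(\A)$ (the prospective class of trivial objects) together with two complete cotorsion pairs of the form $(\mathcal Q,\,\mathcal W\cap\mathcal R)$ and $(\mathcal W\cap\mathcal Q,\,\mathcal R)$. We take $\mathcal W=\E$, the class of acyclic complexes, $\mathcal Q=\dgF$ and $\mathcal R=\dgC$.

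First I would check that $\E$ is thick, i.e.\ closed under direct summands and satisfying two--out--of--three on short exact sequences of complexes: closure under summands is immediate, and the two--out--of--three property follows at once from the long exact homology sequence. Next, the cotorsion pairs $(\dgF,\barC)$ and $(\barF,\dgC)$ are complete by Theorem \ref{induc}, whose hypotheses (that $(\F,\C)$ be cogenerated by a set and that $\F$ contain a generator) are among our assumptions; and they are compatible by Lemma \ref{compat}, which is exactly where the hypothesis that $\F$ be closed under kernels of epimorphisms enters. Compatibility says precisely that $\barF=\dgF\cap\E$ and $\barC=\dgC\cap\E$, which is the shape $(\mathcal Q,\,\mathcal W\cap\mathcal R)$, $(\mathcal W\cap\mathcal Q,\,\mathcal R)$ demanded by Hovey. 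Applying \cite[Theorem 2.2]{hovey2} then produces a (necessarily unique) abelian model structure on $\Ch(\A)$ whose cofibrations are the monomorphisms with cokernel in $\dgF$, whose trivial cofibrations are the monomorphisms with cokernel in $\barF$, whose fibrations are the epimorphisms with kernel in $\dgC$, whose trivial fibrations are the epimorphisms with kernel in $\barC$, and whose class of trivial objects is $\E$.

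It remains to identify the weak equivalences with the homology isomorphisms and to identify the homotopy category. A monomorphism with cokernel in $\barF\subseteq\E$ induces isomorphisms on homology, so every trivial cofibration is a quasi--isomorphism, and dually so is every trivial fibration; hence every weak equivalence, being a composite of such, is a quasi--isomorphism. Conversely, given a quasi--isomorphism $f$, factor it as $f=p\,i$ with $i$ a cofibration and $p$ a trivial fibration; since $p$ is a quasi--isomorphism, two--out--of--three forces $i$ to be one as well, so its cokernel $W$ lies in $\dgF$ and is acyclic, i.e.\ $W\in\dgF\cap\E=\barF$ by compatibility, whence $i$ is a trivial cofibration and $f=p\,i$ is a weak equivalence. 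Thus the weak equivalences are exactly the homology isomorphisms, the homotopy category of the model structure is the localization of $\Ch(\A)$ at the quasi--isomorphisms, namely $\mathbf D(\cart)$, and Quillen's general theory guarantees in particular that this is a genuine locally small category with hom--sets computed as chain homotopy classes of maps between a cofibrant and a fibrant replacement. The only step carrying real content is the identification of the weak equivalences, and it rests entirely on the compatibility proved in Lemma \ref{compat}; the rest is verification that the hypotheses of Hovey's theorem are met.
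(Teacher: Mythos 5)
Your proposal is correct and follows exactly the paper's own route: the paper's proof consists precisely of invoking Theorem \ref{induc} for completeness, Lemma \ref{compat} for compatibility, and then applying \cite[Theorem 2.2]{hovey2}. You merely spell out the details (thickness of $\E$, the identification of weak equivalences with quasi-isomorphisms) that the paper leaves implicit.
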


\begin{proof}
The proof of both theorems is a consequence of Theorem \ref{induc}, Lemma
\ref{compat} and \cite[Theorem
2.2]{hovey2}.

\end{proof}

\section{The Derived category of quasi-coherent sheaves on an Artin stack}\label{secf}

Throughout this section by an algebraic stack we mean an Artin stack with separated and quasi-compact diagonal in the sense of \cite{LMB00}.

An algebraic stack is {\it geometric} if it is quasi-compact with affine diagonal (this definition is due to To\"en and Vezzosi in \cite{TV08} and Lurie \cite{Lurie}). For geometric stacks, Gross in his thesis \cite[(3.5.5)Theorem]{Gro10} proves in a very elegant way that $\Qco(\mathcal X)$ admits enough flat objects (that is, every quasi-coherent $\mathcal O_{\mathcal X}$-module is a quotient of a flat quasi-coherent $\mathcal O_{\mathcal X}$-module. The corresponding result for schemes was known from \cite{AJL1} by using the derived category of quasi-coherent sheaves. We recall that if $\F$ denotes the class of flat quasi-coherent $\mathcal O_{\mathcal X}$-modules, then the elements in $\FP$ are known as {\it cotorsion} quasi-coherent $\mathcal O_{\mathcal X}$-modules.

\begin{theorem}\label{pflat}
Let $\mathcal X$ be an algebraic stack with enough flats (for instance a geometric stack). There is a monoidal model category structure on $\Ch(\Qco(\mathcal X))$ where weak equivalences are homology isomorphisms, the cofibrations (resp. trivial cofibrations) are the monomorphisms whose cokernels are dg-flat complexes (res. flat complexes). The fibrations (resp. trivial fibrations) are the epimorphisms whose kernels are dg-cotorsion complexes (resp. cotorsion complexes). The associated homotopy category is $\mathbf D(\Qco(\mathcal X))$, the derived category of $\Qco(\mathcal X)$.

\end{theorem}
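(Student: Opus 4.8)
The plan is to deduce Theorem \ref{pflat} from the general machinery of Theorem \ref{apmodel2} together with the Grothendieck-category results of Section 4 and the monoidal considerations of the introduction. First I would fix the small category $\C$ of affine schemes smooth over $\mathcal X$ (as in Corollary \ref{esgr}) together with the flat presheaf of rings $R=\mathcal O_{\mathcal X}$, so that $\cart$ is equivalent to $\Qco(\mathcal X)$; from now on I work inside $\A=\cart$. The first task is to produce a cotorsion pair $(\F,\C)$ in $\A$ satisfying the hypotheses of Theorem \ref{apmodel2}: I take $\F$ to be the class of flat cartesian $R$-modules and $\C=\F^{\perp}$ the class of cotorsion objects. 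I must check three things: (i) $(\F,\C)$ is cogenerated by a set, (ii) $\F$ contains a generator of $\A$, and (iii) $\F$ is closed under kernels of epimorphisms.

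For (i), the standard argument is that the flat cartesian modules are, up to retracts, exactly the $\F^{\kappa}$-filtered objects for a suitable regular cardinal $\kappa\geq\lambda$: locally (i.e. vertex-wise) flatness is a filtration property by the Govorov--Lazard theorem, and Proposition \ref{nocont} together with the cartesian version of Hill's Lemma (Lemma \ref{HillConcrete}) and the Kaplansky-type Theorem \ref{kaplans} let me conclude that $(\F,\C)$ is cogenerated by the set $\F^{\kappa}$ of $\kappa$-presentable flat cartesian modules. Here I need to know that a cartesian $R$-module which is vertex-wise flat and $\F^{\kappa}$-filtered in $\Qcart$ remains cartesian, which is precisely the "easy observation" of Section 3 (submodules of cartesian modules in a flat setting are cartesian). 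For (ii), the generator $\oplus_{S\in Z}S$ constructed after Corollary \ref{cor1} — or rather a flat cover / flat resolution of it — must be checked to be flat, or I simply invoke the hypothesis that $\mathcal X$ has enough flats to replace the generator by a flat one: since every object is a quotient of a flat cartesian module, a flat cover of the canonical generator is again a generator lying in $\F$. For (iii), closure of flats under kernels of epimorphisms is checked vertex-wise (a short exact sequence of $R(v)$-modules with flat outer terms has flat kernel), and again the kernel is cartesian by the flatness of $R$.

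Having verified the hypotheses, Theorem \ref{apmodel2} immediately gives the abelian model structure on $\Ch(\A)\simeq\Ch(\Qco(\mathcal X))$ with weak equivalences the homology isomorphisms, cofibrations the monomorphisms with cokernels in $\dgF$ (the dg-flat complexes), trivial cofibrations the monomorphisms with cokernels in $\barF$ (the flat complexes), fibrations the epimorphisms with kernels in $\dgC$ (dg-cotorsion complexes), and trivial fibrations those with kernels in $\barC$ (cotorsion complexes); moreover the homotopy category is $\mathbf D(\cart)=\mathbf D(\Qco(\mathcal X))$. It remains to upgrade this to a \emph{monoidal} model structure. Here I would invoke the tensor product on $\cart$ defined in Section 2 (componentwise $\otimes_{R(v)}$), which passes to $\Ch(\A)$ in the usual way with unit the complex $R$ concentrated in degree $0$, and verify Hovey's criteria for a monoidal model category: the pushout-product axiom and the unit axiom. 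The pushout-product axiom reduces, by the usual cofibrant-generation argument, to checking that the tensor product of two (trivial) cofibrations is again a (trivial) cofibration, which in turn follows from the fact that $\F$ is closed under tensor products (a tensor product of flat modules is flat, checked vertex-wise) together with the analysis of generating cofibrations $S^{n-1}(F)\to D^n(F)$ with $F$ flat; the unit axiom holds because $R$ itself is (vertex-wise) flat, hence cofibrant, so no cofibrant replacement of the unit is needed.

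The main obstacle I anticipate is \textbf{not} the abstract model-structure machinery — that is essentially bookkeeping once Theorem \ref{apmodel2} is in place — but rather the verification that the flat cotorsion pair is \emph{cogenerated by a set} in the cartesian setting, i.e. making the Govorov--Lazard / Kaplansky filtration argument work componentwise while staying inside $\cart$. The subtlety is that flatness of a cartesian module is a purely local condition, so one has plenty of flat \emph{presheaf} modules, but the filtration subobjects produced by Hill's Lemma must be shown to be cartesian; this is exactly where the hypothesis that $R$ is a flat presheaf of rings (so that subobjects of cartesian modules are cartesian) is essential, and where Proposition \ref{nocont} and Theorem \ref{kaplans} (in its form extended to this setting, as noted in the Remark after that theorem) do the real work. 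A secondary point requiring care is that "$\F$ contains a generator" genuinely uses the \emph{enough flats} hypothesis on $\mathcal X$ — without it one cannot guarantee a flat generator — and this is the single place where the geometric/enough-flats assumption enters.
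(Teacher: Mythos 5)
Your proposal is correct and follows essentially the same route as the paper: fix $\C$ as the affine schemes smooth over $\mathcal X$ with $R=\mathcal O_{\mathcal X}$, show via Proposition \ref{nocont} (purity/deconstructibility) that the flat cotorsion pair is cogenerated by the set of $\kappa$-presentable flat cartesian modules, use the enough-flats hypothesis to put a generator in $\F$, feed this into Theorem \ref{apmodel2}, and obtain monoidality from closure of flats under tensor product and flatness of the unit. The only cosmetic differences are that the paper establishes $\F={}^{\perp}(\FP)$ by splitting the special flat precover from \cite[Theorem 4.1]{EE} rather than via Theorem \ref{kaplans}, and packages the monoidal verification as conditions (1)--(3) of \cite[Theorem 5.1]{G} instead of checking the pushout-product and unit axioms by hand.
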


\begin{proof}
Let $\C$ be the category of affine schemes smooth over $\mathcal X$ and $R$ the sheaf of rings $\mathcal O_{\mathcal X}$. We will apply Theorem \ref{apmodel2} to the class $\mathcal F$ of flat cartesian $R$-modules. The fact that $(\F,\FP)$ is a cotorsion pair can be easily derived from Proposition \ref{nocont} and the small object argument \cite[Theorem 2.1.14]{Hov}. Namely, by \cite[Theorem 4.1]{EE} for each $M\in \cart$ there exists a short exact sequence $0\to C\to F\to M\to 0$ with $C\in \FP$ and $F\in \F$. Now if $M\in ^{\perp}\!\!(\FP)$ the sequence splits and so $M$ is also flat cartesian $R$-module. Hence $(\F,\FP)$ is a cotorsion pair. By Proposition \ref{nocont} we see that each $F\in \F$ is $\F^{\kappa}$-filtered for certain $\kappa\geq \lambda$. So $\F$ is cogenerated by the set of iso classes of $\kappa$-presentable objects in $\F$. 

Finally to get that the model structure is monoidal we apply \cite[Theorem
5.1]{G} by observing that the class $\mathcal F$ satisfies conditions (1), (2) and (3) of that Theorem.
\end{proof}
For the next application we need to recall the definition of (infinite dimensional) vector bundle (cf. \cite[Section 2, Definition]{D}). A quasi-coherent $\O_{\mathcal X}$-module $M$ is an {\it infinite dimensional vector bundle} if it is locally projective. Let $\F$ be the class of all infinite dimensional vector bundles. An algebraic stack $\mathcal X$ has the {\it resolution property} if every quasi-coherent sheaf is a quotient of a filtered direct limit of locally free sheaves of finite type. In particular if an algebraic stack satisfies the resolution property, the class $\F$ contains a family of generators for $\Qco(\mathcal X)$, in other words $\Qco(\mathcal X)$ has enough vector bundles. By \cite[Theorem 1.1]{Totaro} for a normal noetherian algebraic stack $\mathcal X$ the resolution property is equivalent to $\mathcal X$ being isomorphic to the quotient stack of some quasi-affine scheme by an action of the group GL$_n$. This has been extended by \cite[Theorem 6.3.1]{Gro10} for non-normal noetherian stacks, and by \cite{Rydh} without noetherian hypothesis.

\begin{theorem}\label{presol}
Let $\mathcal X$ be an algebraic stack with pointwise affine stabilizer group that satisfies the resolution property (for instance if $\mathcal X$ is a global quotient stack, cf. \cite[2.18]{Tho87}). There is a monoidal model category structure on $\Ch(\Qco(\mathcal X))$ where weak equivalences are homology isomorphisms, the cofibrations (resp. trivial cofibrations) are the monomorphisms whose cokernels are dg-complexes of (infinite dimensional) vector bundles (resp. exact complexes of vector bundles whose every quasicoherent sheaf of cycles is a vector bundle). The fibrations (resp. trivial fibrations) are the epimorphisms whose kernels are dg-orthogonal to the class of infinite dimensional vector bundles. The associated homotopy category is $\mathbf D(\Qco(\mathcal X))$, the derived category of $\Qco(\mathcal X)$.

\end{theorem}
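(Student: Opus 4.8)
The plan is to mimic the proof of Theorem \ref{pflat}, reducing everything to an application of Theorem \ref{apmodel2} followed by the monoidality criterion of \cite[Theorem 5.1]{G}. First I would set $\C$ to be the category of affine schemes smooth over $\mathcal X$ and $R=\mathcal O_{\mathcal X}$, so that by Corollary \ref{esgr} (the sheaf of rings is flat since smooth morphisms are flat) we may work in $\A=\cart$ in place of $\Qco(\mathcal X)$. Let $\F$ be the class of locally projective cartesian $R$-modules, i.e. those $M$ with $M(v)$ projective over $R(v)$ for every $v$. The four verifications needed to invoke Theorem \ref{apmodel2} are: (i) $(\F,\F^{\perp})$ is a cotorsion pair; (ii) it is cogenerated by a set; (iii) $\F$ contains a generator of $\A$; and (iv) $\F$ is closed under kernels of epimorphisms.

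For (iii), the resolution property says every quasi-coherent sheaf on $\mathcal X$ is a quotient of a filtered colimit of locally free sheaves of finite type; such colimits are locally projective (a filtered colimit of finitely generated projectives over each $R(v)$ is flat, and here by \cite[Theorem 1.1]{Totaro}/\cite[Theorem 6.3.1]{Gro10}/\cite{Rydh} combined with the pointwise affine stabilizer hypothesis one in fact gets enough genuinely locally projective objects), so $\F$ contains a generating set. For (iv), projectivity over each $R(v)$ is preserved by kernels of epimorphisms \emph{between locally projectives}: if $0\to K\to P\to P'\to 0$ with $P,P'$ locally projective, the sequence splits componentwise, so $K(v)$ is a direct summand of $P(v)$, hence projective; one must also check $K$ is cartesian, which is automatic by the observation in Section 3 that a submodule of a cartesian module is cartesian once $R$ is flat. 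For (i) and (ii): given $M\in\A$, pick by the resolution property a short exact sequence $0\to C\to F\to M\to 0$ with $F\in\F$; if $M\in{}^{\perp}(\F^{\perp})$ this splits and $M$ is a summand of $F$, hence in $\F$, giving $\F={}^{\perp}(\F^{\perp})$. Then, exactly as in Theorem \ref{pflat}, Proposition \ref{nocont} shows every $F\in\F$ is $\F^{\kappa}$-filtered for some regular $\kappa\geq\lambda$ (a pure cartesian submodule of a locally projective module is locally projective, since pure submodules of projective modules are projective), so by (the $\Ch$-extension of) Theorem \ref{kaplans} the pair is cogenerated by the set of iso classes of $\kappa$-presentable objects of $\F$.

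With these four facts in hand, Theorem \ref{apmodel2} gives the abelian model structure on $\Ch(\A)\simeq\Ch(\Qco(\mathcal X))$ with weak equivalences the homology isomorphisms, cofibrations the monomorphisms with cokernel in $\dgF$, trivial cofibrations the monomorphisms with cokernel in $\barF$, fibrations the epimorphisms with kernel in $\dgC$ (the dg-$\F^{\perp}$ complexes), and trivial fibrations the epimorphisms with kernel in $\barC$; and its homotopy category is $\mathbf D(\Qco(\mathcal X))$. Finally, for monoidality I would invoke \cite[Theorem 5.1]{G}, checking that $\F$ (locally projective cartesian modules) satisfies its three conditions: $\F$ contains a generating set (done above), $\F\otimes_R\F\subseteq\F$ (a tensor product of projective $R(v)$-modules is projective, and the tensor product of cartesian modules is computed componentwise), and the unit $R$ lies in $\F$ together with the requisite flatness for the pushout-product axiom (each $R(v)$ is free, hence projective, over itself).

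The main obstacle I expect is verification (iii) in its correct form: the resolution property as literally stated only yields quotients of filtered colimits of finite-rank locally free sheaves, and a filtered colimit of vector bundles need not be a vector bundle (it is only flat). The hypothesis that $\mathcal X$ has pointwise affine stabilizer groups is precisely what is needed to upgrade this — via the structure theorem \cite[Theorem 6.3.1]{Gro10} (or \cite{Rydh}) identifying such $\mathcal X$ with a quotient of a quasi-affine scheme by a $\mathrm{GL}_n$-action — to the statement that $\Qco(\mathcal X)$ has \emph{enough} (infinite-dimensional) vector bundles, i.e. that the locally projective objects generate. Getting this reduction clean, and making sure the cardinality/filtration machinery of Section \ref{Hi} applies to the class of locally projectives (which requires the closure of locally projectives under $\lambda$-pure submodules), is where the real work lies; everything else is parallel to Theorem \ref{pflat}.
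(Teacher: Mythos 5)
Your overall strategy (reduce to Theorem \ref{apmodel2} for the class $\F$ of locally projective cartesian $R$-modules, then check the conditions of \cite[Theorem 5.1]{G} for monoidality) is the same as the paper's, but two of your verifications have genuine gaps, and they are precisely the two points where the paper has to work harder than in Theorem \ref{pflat}. First, your argument that $\F={}^{\perp}(\FP)$ does not work: from the resolution property you get $0\to C\to F\to M\to 0$ with $F\in\F$, but for $M\in{}^{\perp}(\FP)$ to force a splitting you need $\Ext^1(M,C)=0$, i.e.\ $C\in\FP$, and the kernel of an arbitrary epimorphism from a locally projective has no reason to be cotorsion-orthogonal to $\F$. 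The paper repairs this by first producing, via \cite[Lemma 2.4, Theorem 2.5]{EEGO2}, a special sequence $0\to U\to N\to Z\to 0$ with $N\in\FP$ and $Z$ $\F$-filtered for the kernel $U$ of the resolution-property epimorphism $G\to M$, and then forming the pushout to get $0\to N\to Y\to M\to 0$ with $Y\in\F$ and $N\in\FP$; only then does the splitting argument apply. (Note also that the paper uses $G$ a \emph{direct sum} of locally free sheaves of finite type, which is genuinely locally projective, so the filtered-colimit worry you raise at the end does not arise in the actual argument.)

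Second, your claim that ``exactly as in Theorem \ref{pflat}, Proposition \ref{nocont} shows every $F\in\F$ is $\F^{\kappa}$-filtered'' rests on the assertion that a pure submodule of a projective module is projective. That is false in general: pure submodules of projectives are only flat and Mittag--Leffler, and flat Mittag--Leffler modules need not be projective; moreover even a filtration by pure locally projective submodules would not automatically have locally projective consecutive quotients. This is exactly why the flat case goes through with Proposition \ref{nocont} alone (purity preserves flatness of submodules and quotients) while the locally projective case does not. The paper instead invokes Kaplansky's theorem to write each $F(v)$ as a direct sum of countably generated projectives, takes the resulting filtration $\mathcal P_v$ of $F(v)$ and its associated Hill family $\mathcal H_v$ from Lemma \ref{HillConcrete}, and then interleaves condition (H4) with Proposition \ref{nocont} to build cartesian submodules $F_\alpha\subseteq F$ with $F_\alpha(v)\in\mathcal H_v$ for all $v$; condition (H3) then guarantees that the quotients $F_{\alpha+1}/F_\alpha$ are componentwise projective and $\kappa$-presentable. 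You correctly sensed that ``the real work'' is in making the filtration machinery apply to locally projectives, but the mechanism you propose for doing so does not close the gap.
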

\begin{proof}
Again we let $\C$ be the category of affine schemes smooth over $\mathcal X$ and $R$ the sheaf of rings $\mathcal O_{\mathcal X}$.
Then we will apply Theorem \ref{apmodel2} to the class $\mathcal F$ of locally projective cartesian $R$-modules. To show that $(\F,\FP)$ is a cotorsion pair cogenerated by a set we will first prove that every $F\in \F$ is $\F^{\kappa}$-filtered for certain $\kappa\geq\lambda$. By Kaplansky Theorem each $F(v)$ is a direct sum of countably generated projective $R(v)$-modules, in particular every $F(v)$ possess a filtration $\mathcal P_v$ by countably generated projective $R(v)$-modules.  Let $\mathcal{H}_v$ be  the family
associated to $\mathcal{P}_v$ by Lemma \ref{HillConcrete} and
$\{x_{v,\alpha} | \alpha < \tau _v \}$ be an $R(v)$-generating set
of the $R(v)$-module $F(v)$. W.l.o.g., we can
assume that for some ordinal $\tau$, $\tau = \tau _v$ for all $v$.

Let us denote by $(F_\alpha |\
\alpha \leq \tau)$ the desired $\mathcal F^ {\kappa}$-filtration that we will get for $F$. Let
$F_0=0$. Assume that $F_\alpha$ is defined for some
$\alpha < \tau$ such  that $F_{\alpha}(v) \in \mathcal{H}_v$ and
$x_{v,\beta} \in F_{\alpha}(v)$ for all $\beta < \alpha$ and all $v$. Set  $N_{v,0}= F_\alpha (v)$. By condition (H4) of
Lemma \ref{HillConcrete}, there is a module $N_{v,1} \in \mathcal{H}_v$ such that
$N_{v,0} \subseteq N_{v,1}$ and $N_{v,1} / N_{v,0}$ is $\lambda$-presentable.

By Proposition \ref{nocont} there is a
cartesian $R$-submodule $G_1 $ of $F$
such that $F_\alpha \subseteq G_1 $ and $G_1
/ F_\alpha$ is $\lambda$-presentable. Therefore $G_1 (v) =
N_{v,1}+ \langle S_v \rangle$ for a set  $S_v \subseteq
G_1  (v)$, of cardinality $<\lambda$. Now by condition (H4) of Lemma \ref{HillConcrete}, there is
a module $N_{v,2} \in \mathcal{H}_v$ such that $G_1  (v)= N_{v,1} +
\langle S_v \rangle \subseteq N_{v,2}$ and $N_{v,2} / N_{v,1}$ is $\lambda$-presentable. Following in this manner, we get
a countable chain $(G_n  |\
n < \aleph_0)$ of cartesian $R$-submodules of $F$ and a countable chain $(N_{v,n} | n < \aleph_0)$ of
$R(v)$-submodules of $F(v)$. We define $F_{\alpha+1}= \sum_{n < \aleph_0} G_n $. Then it is clear that $F_{\alpha+1}\subseteq F$ is a cartesian $R$-module satisfying
$F_{\alpha+1}(v)= \sum_{n < \aleph_0} G_n (v)$ for each $v$. By condition (H2) of
Lemma \ref{HillConcrete}, we deduce that $F_{\alpha+1}(v) \in \mathcal{H}_v$
and $|F_{\alpha+1}(v) / F_\alpha (v)|<\lambda$. Therefore $F_{\alpha+1} / F_\alpha \in \mathcal{F}^ {\kappa}$.
Finally if $\alpha\leq \tau$ is an ordinal limit and $F_\beta$ is defined for every $\beta < \alpha$, we define
$F_\alpha = \sum_{\beta < \alpha} F_\beta$.
Since $x_{v,\alpha} \in F_{\alpha+1}(v)$ for each $v$ and $\alpha <
\tau$, we have  $F_\tau(v)=F(v)$. So $(F_\alpha |\ \alpha
\leq \tau)$ is the desired $\mathcal{F}^{\kappa}$-filtration of $F$. Then, by \cite[Theorem 1.2]{Eklof} the pair $(\F,\FP)$ is cogenerated by the set of iso classes of $\mathcal{F}^{\kappa}$ cartesian $R$-modules.

By \cite[Lemma 2.4, Theorem 2.5]{EEGO2}, for all $A\in \cart$ there
exists a
short exact sequence\begin{equation}\label{equat1}
0\to  A\to P\to Z\to 0
\end{equation}
where $P\in \FP$ and
$Z$ has an $\F$-filtration. Given any $M\in\cart$, since $\mathcal X$ satisfies the resolution property,
there
exists a short
exact sequence
$$0\to U\to G\to M\to 0 $$ where  $G$ is a
direct sum of locally free cartesian $R$-modules (of finite type).
Now let
$$0\to U\to N\to  Z\to 0$$ be exact with $
N\in \FP$ and $Z$ admitting an $\F$-filtration. Form a pushout
and
get

$$\begin{CD}
@.     0@.        0      @.     @.\\
@.     @VVV       @VVV   @.     @.\\
0@>>>  { U}@>>> { G }@>>>  { M}@>>>  0\\
@.     @VVV  @VVV   @|     @.\\
0@>>>  { N}@>>> { Y}@>>>  { M}@>>>  0\\
@.     @VVV       @VVV @.  @.\\
@.     { Z} @= {Z} @.  @.\\
@.     @VVV       @VVV   @.     @.\\
@.     0@.        0      @.     @.
\end{CD}$$
Then since $G\in \F$ 
and $Z$ has an $\F$-filtration (so $Z\in\mathcal \F$, since $\F$ is closed under $\F$-filtrations), we see that $Y\in \mathcal F$. Also $N\in \FP$.
Hence if $M\in  ^{\perp}\!\!(\FP)$ we get that $0\to N\to Y\to
M\to 0$ splits and
so $ M$ is a direct summand of $Y\in \mathcal F$. But then
$M\in \mathcal F$ because $\mathcal F$ is closed under direct summands.

\medskip\par

Again, the model structure is monoidal because the class of locally projective cartesian $R$-modules is contained in the class of flat cartesian $R$-modules, hence condition (1) of \cite[Theorem
5.1]{G} holds (and so in particular $\F$ contains the unit $R$ for the monoidal structure on $\Qco(\mathcal X)$, so condition (3) holds). It is also immediate to notice that the tensor product of two locally projective cartesian $R$-modules is again locally projective. So condition (2) of \cite[Theorem 5.1]{G} is satisfied, what finishes the proof.

\end{proof}

{\bf Remarks:}
\begin{enumerate}
\item There is a slightly different notion of algebraic stack in the literature, due to Goerss \cite{Goe04} by considering that the diagonal morphism is affine and that there is an affine scheme $U$ and a faithfully flat 1-morphism, $p:U\to \mathcal X$. Given a flat Hopf algebroid in \cite[Section 3]{Nau07} it is shown that there is an equivalence between the category of (left) comodules on a flat Hopf algebroid and the category of $\O_{\mathcal X}$-modules on a certain algebraic stack, in the sense of Goerss (see \cite{Holl} for a generalization). The results of this paper apply to this setting, just by picking the suitable category $\cart$, thus providing monoidal model category structures on $\Ch(\Gamma)$, the category of unbounded complexes of (left) $\Gamma$-comodules over $(A, \Gamma)$. In case of "well-behaved" flat Hopf algebroids (see \cite{Hov3} for a precise formulation of "well-behaved") Hovey in \cite{Hov3} already defined and studied a model category structure over $\Ch(\Gamma)$. Its associated model category is the derived category of $(A,\Gamma)$. 

\item Let $X$ be a quasi-compact semi-separated scheme. The derived category of quasi-coherent sheaves is a stable homotopy category in the sense of \cite{HPS97}.
This was shown in \cite{AlonsoT}. The main ingredients of their proof where the facts that $\Qco(X)$ is a Grothendieck category (for any scheme $X$) and that each quasi-coherent $\O_X$-module admits flat resolutions provided that $X$ is quasi-compact and semi-separated. For the category $\Qco(\mathcal X)$ ($\mathcal X$ an arbitrary Artin stack) the axioms (a) and (d) of \cite[Definition 1.1.4]{HPS97} trivially hold. By Corollary \ref{esgr} $\Qco(\mathcal X)$ is Grothendieck, hence each cohomology functor on $\Qco(\mathcal X)$ is representable (cf. \cite[Theorem 5.8]{AJS1}), so the axiom (e) of \cite{HPS97} holds. Now it seems reasonable to conjecture that using the results of this paper and \cite[(3.5.5) Theorem]{Gro10} the remainder axioms (b) and (c) of \cite[Definition 1.1.4]{HPS97} may be followed as in \cite{AlonsoT} for the case of (quasi-compact and semi-separated) schemes.

\item Let $\QQFlat$ be the class of flat quasi-coherent $\O_X$-modules over a geometric stack $\mathcal X$. Again by \cite[(3.5.5) Theorem]{Gro10} and the application of the small object argument (cf. \cite[Theorem 4.1]{EE}) the pair $(\QQFlat,\QQFlat^{\perp})$ is a complete cotorsion pair.

In his thesis \cite{Mur2}, Murfet defines the {\it mock homotopy category of projectives},
$\K_m(\QProj)$ for a quasi--compact and semi--separated scheme $X$. The starting point is
Neeman's description (cf. \cite[Facts 2.14(iii)]{neeman1}) on the affine case $X=Spec(R)$ of $\K(\QProj)$ as the Verdier quotient
$$\K(\Flat)/\K(\widetilde{\Flat})$$ and define $\K_m(\QProj)$ as the corresponding
Verdier quotient $\K(\QFlat)/\K(\widetilde{\QFlat})$ for arbitrary
quasi--compact and semi--separated schemes (see \cite[Definition 3.3 and Proposition
3.4]{Mur2}). Then he proves the existence of a right adjoint functor of the
Verdier quotient map $j^*:\K(\QFlat)\to \K_m(\QProj)$, thus extending to quasi--compact and semi--separated schemes the affine case \cite[Theorem 0.1]{neeman}. Now it is a general fact that any complete cotorsion pair $(\mathcal A,\mathcal B)$ in $\Ch(\cart)$ enables to show the existence of right (resp. left) adjoints of the embeddings $\K(\mathcal A)\to \K(\A)$ and $\K(\mathcal B)\to
\K(\A)$, provided that $\mathcal A$ is closed
under taking suspensions (see \cite[Theorem 3.5]{EBIJ}). Now by Theorem \ref{induc} the pair $(\QQFlat,\QQFlat^{\perp})$ in $\Qco(\mathcal X)$ gives rise to the complete cotorsion pair $(\barF,\dgC)$ in $\Ch(\Qco(\mathcal X))$ (where $\mathcal F=\QQFlat$ and $\C=\QQFlat^{\perp}$).  Hence we can extend Murfet and Neeman results to geometric stacks, to conclude that for a geometric stack $\mathcal X$ the canonical map $j^*:\K(\QQFlat)\to \K_m(\QQProj)$ has a right adjoint functor.
As a consequence there is a localization sequence $$\K(\widetilde{\QQFlat})\to
\K(\QQFlat)\to \K_m(\QQProj).$$

\end{enumerate}


\begin{thebibliography}{}

\bibitem[ATJLL97]{AJL1}
{\sc L.\ Alonso Tarr\'{\i}o, A.\ Jerem\'{\i}as L\'opez, J.\ Lipman},
{\sl Local homology and cohomology on schemes},  Ann. Sci. \'Ecole Norm.
Sup. {\bf 30(4)} (1997), 1--39.


\bibitem[AJPV08]{AlonsoT}
{\sc L.\ Alonso Tarr\'{\i}o, A.\ Jerem\'{\i}as L\'opez, M.\ P\'erez Rodr\'{\i}guez and M.\ J.\ Vale
Gonsalves}, {\sl The derived category of quasi-coherent sheaves and axiomatic stable homotopy}, Adv.\ in Math. {\bf 218}(2008), 1224--1252.


\bibitem[ATJLSS00]{AJS1}
{\sc L.\ Alonso Tarr\'{\i}o, A.\ Jerem\'{\i}as L\'opez and M.\ J.\ Souto Salorio}, {\sl Localization in categories of complexes and unbounded resolutions}, Canad.\ J.\ Math. {\bf 52}(2000), 225--247.



\bibitem[EBIJR12]{EBIJ}
{\sc D. Bravo, E. Enochs, A. Iacob, O. Jenda and J. Rada}, {\sl Cotorsion pairs
in C(R-Mod)}, Rocky\ Mountain\ J.\ Math. {\bf 42} (2012), 1787--1802.


\bibitem[Con00]{Conrad}
{\sc B.\ Conrad}, {\sl Grothendieck duality and base change}, Lecture
Notes in Mathematics, {\bf 1750}, Springer-Verlag, 2000.









\bibitem[Dri06]{D} {\sc V.\ Drinfeld},
{\sl Infinite--dimensional vector bundles in algebraic geometry: an
introduction}, in 'The Unity of Mathematics', Birkh\&quot; auser, Boston
2006, pp.\ 263--304.


\bibitem[Ekl77]{Eklof} {\sc P.C.\ Eklof},
{\sl Homological algebra and set theory}, Trans.\ Amer.\ Math.\ Soc.\
{\bf 227}(1977), 207--225.





\bibitem[EE05]{EE} {\sc E.\ Enochs and S.\ Estrada}, {\sl Relative homological
algebra in the category of quasi-coherent sheaves}, Adv.\ in Math.
{\bf 194}(2005), 284--295.

\bibitem[EE12]{EE12} {\sc E.\ Enochs and S.\ Estrada}, {\sl Cartesian modules in small categories},  preprint, \texttt{arXiv:math.AG/1203.5724}.




\bibitem[EEGRO04]{EEGO2}
{\sc E.\ Enochs, S. Estrada, J.R. Garc\'{\i}a Rozas, and L. Oyonarte}, {\sl
Flat covers in the category of quasi-coherent sheaves over the
projective line.} Comm.\ Algebra. {\bf 32}(2004), 1497--1508.







\bibitem[EGPT12]{EGPT}
{\sc S.\ Estrada, P.\ Guil Asensio, M.\ Prest and J.\ Trlifaj},
{\sl Model category structures arising from Drinfeld vector bundles}, Adv.\ in Math.
{\bf 231}(2012), 1417--1438.


\bibitem[Gil04]{G0} {\sc J.\ Gillespie},
{\sl The flat model structure on Ch(R)}, Trans.\ Amer.\ Math.\ Soc.\ {\bf
356}(2004), 3369--3390.

\bibitem[Gil07]{G} {\sc J.\ Gillespie},
{\sl Kaplansky classes and derived categories}, Math.\ Z. {\bf
257}(2007), 811--843.

\bibitem[Gil08]{G3} {\sc J.\ Gillespie},
{\sl Cotorsion pairs and degreewise homological model structures}, Homol.\
Homot.\ App.\ {\bf 10}(2008), 283--304.





\bibitem[GT06]{GT} {\sc R.\ G\" obel and J.\ Trlifaj},
{\sl Approximations and Endomorphism Algebras of Modules}, GEM {\bf
41}, W.\ de Gruyter, Berlin 2006.


\bibitem[Goe04]{Goe04}
{\sc P. Goerss}, {\sl (Pre-)sheaves of ring spectra over the moduli stack of formal group laws. Axiomatic, Enriched and
Motivic Homotopy Theory}, 101-131, NATO Sci. Ser. II Math. Phys. Chem., {\bf 131}, Kluwer Acad. Publ., Dordrecht,
2004.



\bibitem[Gro10]{Gro10} {\sc P.\ Gross},
{\sl Vector bundles as generators on schemes and stacks}, PhD. Thesis, D\"usseldorf, May 2010.







\bibitem[Holl08]{Holl} {\sc S.\ Hollander}, {\sl Characterizing algebraic stacks}, Proc. Amer. Math. Soc.\ {\bf 136} (2008), 1465--1476.



\bibitem[Hov98]{Hov}
{\sc M. Hovey}, {\sl Model Categories}, MSM {\bf 63},
Amer.\ Math.\ Soc., Providence, RI, 1998.

\bibitem[Hov01]{Hov2}
{\sc M. Hovey}, {\sl Model category structures on chain complexes of
sheaves}, Trans.\ Amer.\ Math.\ Soc. {\bf 353(6)}(2001), 2441--2457.

\bibitem[Hov02]{hovey2}
{\sc M. Hovey}, {\sl Cotorsion pairs, model category structures, and
representation theory}, Math.\ Z. {\bf 241} (2002), 553--592.



\bibitem[Hov04]{Hov3}
{\sc M. Hovey}, {\sl
Homotopy theory of comodules over a Hopf algebroid}, Homotopy theory: relations with algebraic geometry, group cohomology, and algebraic K-theory, 261?304,
Contemp.\ Math., {\bf 346}, Amer.\ Math.\ Soc., Providence, RI, 2004. 

\bibitem[HPS97]{HPS97}
{\sc M.\ Hovey, J.\ H.\ Palmieri and N.\ P.\ Strickland}, {\sl Axiomatic stable homotopy theory}, Mem. Amer. Math. Soc.{\bf 128}(1997), no. 610.


\bibitem[LMB00]{LMB00}
{\sc G.\ Laumon and L.\ Moret-Bailly}, {\sl Homology}, Springer--Verlag, Berlin, 2000.








\bibitem[Lur05]{Lurie}
{\sc J.\ Lurie}, {\sl Tannaka duality for geometric stacks}, preprint, \texttt{arXiv:math.AG/0412266v2}.


\bibitem[May01]{May} {\sc J.\ P.\ May}, {\sl The additivity of traces in triangulated categories.} Adv.\ in Math.
{\bf 163}(2001), 34--73.


\bibitem[Mur08]{Mur2}{\sc D.\ Murfet},
{\sl The Mock Homotopy Category of Projectives and Grothendieck Duality}, PhD
thesis,
Aust. National U. 2008.



\bibitem[Nau7]{Nau07} {\sc N.\ Naumann}, {\sl The stack of formal groups in stable homotopy theory.} Adv.\ in Math.
{\bf 215}(2007), 569--600.

\bibitem[Nee08]{neeman1}
{\sc A.\ Neeman}, {\sl The homotopy category of flat modules, and Grothendieck
duality}, Invent.\ Math. {\bf 174}(2008), 255--308.

\bibitem[Nee10]{neeman}
{\sc A.\ Neeman}, {\sl Some adjoints in homotopy categories,} Ann.\
Math. {\bf 171}(2010), 2143--2155.
 


\bibitem[Qui67]{Quillen}
{\sc D.\ Quillen}, {\sl Homotopical Algebra}, LNM {\bf 43},
Springer--Verlag, Berlin/New York, 1967.

\bibitem[Ols07]{olsson} {\sc M.\ Olsson}, {\sl Sheaves on Artin stacks.} J.\ Reine
Angew.\ Math. {\bf 603}(2007), 55-112.

\bibitem[RG71]{RG} {\sc M.\ Raynaud and L.\ Gruson},
{\sl Crit\`eres de platitude et de projectivit\'e}, Invent.\ Math. {\bf
13}(1971), 1--89.

\bibitem[Ryd13]{Rydh}
{\sc D.\ Rydh}, {\sl Noetherian approximation of algebraic spaces and stacks}, preprint, \texttt{arXiv:math.AG/0904.0227v3}.




\bibitem[Tho87]{Tho87} {\sc R.\ W.\ Thomason}, {\sl Equivariant resolutions, linearization, and Hilbert's fourteenth problem over arbitrary base schemes.} Adv.\ in Math.{\bf 65}(1987), 16--34.


\bibitem[TV08]{TV08}{\sc B.\ To\"en and G.\ Vezzosi}, 
{\sl Homotopical algebraic geometry. II. Geometric stacks and applications.},
Mem. Amer. Math. Soc.{\bf 193}(2008), no. 902.




\bibitem[Tot04]{Totaro}
{\sc B. Totaro}, {\sl The resolution property for schemes and stacks},
J. reine angew. Math. {\bf 577} (2004), 1--22.






















\end{thebibliography}
\end{document}